\makeatletter \@addtoreset{equation}{section} \makeatother
\newcommand{\RR}{{\mathbb  R}}
\newcommand{\ZZ}{{\mathbb Z}}
\newtheorem{thm}{Theorem}[section]
\newtheorem{lem}[thm]{Lemma}
\newtheorem{prop}[thm]{Proposition}
\newtheorem{cor}[thm]{Corollary}
\begin{document}

\begin{frontmatter}
\title{Global Rates of Convergence of the MLE for Multivariate Interval Censoring} 
\runtitle{Multivariate interval censoring global rates}

\begin{aug}
\author{\fnms{Fuchang} \snm{Gao}\thanksref{t1}\ead[label=e1]{fuchang@uidaho.edu}}
\and
\author{\fnms{Jon A.} \snm{Wellner}\thanksref{t2}\ead[label=e2]{jaw@stat.washington.edu}}
\ead[label=u1,url]{http://www.stat.washington.edu/jaw/}

\thankstext{t1}{Supported in part by a grant from the Simons Foundation (\#246211)}
\thankstext{t2}{Supported in part by NSF Grant DMS-1104832 and NI-AID grant 2R01 AI291968-04} 
\runauthor{Gao and Wellner}


\address{Department of Mathematics \\University of Idaho\\Moscow, Idaho 83844-1103\\
\printead{e1}}

\address{Department of Statistics, Box 354322\\University of Washington\\Seattle, WA  98195-4322\\
\printead{e2}\\
\printead{u1}}
\end{aug}

\begin{abstract}
We establish global rates of convergence of the Maximum Likelihood Estimator (MLE) of 
a multivariate distribution function on $\RR^d$ in the case of (one type of) ``interval censored''
data.  The main finding is that the rate of convergence of the MLE in the Hellinger metric
is no worse than $n^{-1/3} (\log n)^{\gamma}$ for $\gamma = (5d - 4)/6$. 
\end{abstract}

\begin{keyword}[class=AMS]
\kwd[Primary ]{62G07}
\kwd{62H12}
\kwd[; secondary ]{62G05}
\kwd{62G20}
\end{keyword}

\begin{keyword}
\kwd{empirical processes}
\kwd{global rate}
\kwd{Hellinger metric}
\kwd{interval censoring}
\kwd{multivariate}
\kwd{multivariate monotone functions}
\end{keyword}

\end{frontmatter}


\bigskip

\bigskip

\section{Introduction and overview}
\label{sec:intro}

Our main goal in this paper is to study global rates of convergence of 
the Maximum Likelihood Estimator (MLE) in one simple model for multivariate 
interval-censored data.  In section 3 we will show that under some reasonable
conditions the MLE converges in a Hellinger metric to the true distribution 
function on $\RR^d$ at a rate no worse than $n^{-1/3} (\log n)^{\gamma_d}$ for $\gamma_d = (5d - 4)/6$ 
for all $d\ge 2$.  Thus the rate of convergence is only worse than the known rate of $n^{-1/3}$ for 
the case $d=1$ by a  factor involving a power of $\log n$ growing linearly with the dimension.
These new rate results rely heavily on recent bracketing entropy bounds for $d-$dimensional 
distribution functions obtained by \cite{Gao:12}.

We begin in Section~\ref{sec:IntCensOnR} with a review of interval censoring problems and known results in the case
$d=1$.  We introduce the multivariate interval censoring model of interest here in 
Section~\ref{sec:MultIntervalCensoring}, and 
obtain a rate of convergence for this model for $d\ge 2$ in Theorem~\ref{NewMultHellingerRateThm}.
Most of the proofs are given in Section~\ref{sec:proofs}, with the exception being 
a key corollary of \cite{Gao:12}, the statement and proof of which are given in 
the Appendix (Section~\ref{sec:Appendix}).
Finally, in Section~\ref{sec:OtherModelsFurtherProblems} 
we introduce several related models and further problems.

\section{Interval Censoring (or Current Status Data) on $\RR$}
\label{sec:IntCensOnR}

Let $Y\sim F_0$ on $\RR^+$,  and let $T \sim G_0$ on $\RR^+$ be independent of $Y$.  Suppose that we 
observe $X_1, \ldots , X_n$ i.i.d. as $X = (\Delta , T)$ where $\Delta = 1_{[Y \le T]}$.  
Here $Y$ is often the time until some event of interest and $T$ is an observation time.  
The goal is to estimate $F_0$ nonparametrically based on observation of the $X_i$'s.

To calculate the likelihood, we first calculate the distribution of $X$ 
for a general distribution function $F$:  note that the conditional distribution of $\Delta$ conditional on $T$ is
Bernoulli:  
$$
( \Delta | T ) \sim \mbox{Bernoulli} (p(T))
$$
where $p(T) = F(T)$.   If $G_0$ has density $g_0$ with respect to some measure $\mu$ on $\RR^+$, then
$X = (\Delta , T)$ has density 
$$
p_{F,g_0} (\delta, t) = F(t)^{\delta} (1-F(t))^{1-\delta} g_0 (t),   \ \ \ \delta \in \{ 0,1 \},  \ \ t \in \RR^+,
$$
with respect to the dominating measure (counting measure on $\{ 0,1 \}) \times \mu$.  

The nonparametric Maximum Likelihood Estimator (MLE) $\hat{F}_n$ of $F_0$ in this interval 
censoring model was first obtained by 
\cite{MR0073895}.  It is simply described as follows:  let $T_{(1)} \le \cdots \le T_{(n)}$ denote 
the order statistics corresponding to $T_1, \ldots , T_n$ and let 
$\Delta_{(1)}, \ldots , \Delta_{(n)}$ denote the corresponding $\Delta$'s.  
Then the part of the log-likelihood of $X_1 ,\ldots , X_n$ depending on $F$ 
is given by 
\begin{eqnarray}
l_n (F) & = & \sum_{i=1}^n \{ \Delta_{(i)} \log F(T_{(i)}) + (1-\Delta_{(i)} ) \log (1-F(T_{(i)})) \} \nonumber\\
& \equiv & \sum_{i=1}^n \{ \Delta_{(i)} \log F_{i} + (1-\Delta_{(i)} ) \log (1-F_{i}) \} \label{LogLikelihood}
\end{eqnarray}
where 
\begin{eqnarray}
0 \le F_1 \le \cdots \le F_n \le 1 . 
\label{MonConstr}
\end{eqnarray} 
It turns out that the maximizer $\hat{F}_n$ of (\ref{LogLikelihood}) subject to (\ref{MonConstr})
can be described as follows:
let $H^*$ be the (greatest) convex minorant of the points $\{ (i, \sum_{j\le i} \Delta_{(j)}): \ i \in \{ 1, \ldots , n \} \}$:
\begin{eqnarray*}
H^* (t) = \sup \left \{ \begin{array}{l} H(t) : \ \ H(i) \le \sum_{j\le i} \Delta_{(j)}\ \ \mbox{for each} \ \ 0 \le i \le n \\
                                                         \ \ H(0) = 0, \ \mbox{and} \ \ H \ \ \mbox{is convex} 
                                 \end{array} \right \} .
\end{eqnarray*}
Let $\hat{F}_i$ denote the left-derivative of $H^*$ at $T_{(i)}$.  Then 
$(\hat{F}_1, \ldots , \hat{F}_n)$ is the unique vector maximizing (\ref{LogLikelihood}) subject to 
(\ref{MonConstr}), and we therefore take the MLE $\hat{F}_n$ of $F$ to be
\begin{eqnarray*}
\hat{F}_n (t) = \sum_{i=0}^n \hat{F}_i 1_{[T_{(i)}, T_{(i+1)})} (t) 
\end{eqnarray*}
with the conventions $T_{(0)} \equiv 0$ and $T_{(n+1)} \equiv \infty$.  See 
\cite{MR0073895} or \cite{MR1180321}, pages 38-43, for details.  

\cite{Groeneboom:87} initiated the study of $\hat{F}_n$ and proved the following limiting 
distribution result at a fixed point $t_0$.  

\begin{thm}  (Groeneboom, 1987).
\label{GroeneboomPointwiseLimitDistributionRone}
Consider the current status model on $\RR^+$.
Suppose that $0 < F_0(t_0) , G_0(t_0) < 1$ and suppose that 
$F$ and $G$ are differentiable at $t_0$ with strictly positive derivatives 
$f_0(t_0)$ and $g_0(t_0)$ respectively.  Then
\begin{eqnarray*}
n^{1/3} ( \hat{F}_n (t_0) - F_0(t_0) ) \rightarrow_d c(F_0,G_0)  \ZZ
\end{eqnarray*}
where 
\begin{eqnarray*}
c(F_0,G_0) = 2 \left ( \frac{F_0(t_0) (1-F_0(t_0))f_0(t_0)}{2g_0 (t_0)} \right )^{1/3} 
\end{eqnarray*}
and 
\begin{eqnarray*}
\ZZ = \mbox{argmin} \{ W(t) + t^2 \}
\end{eqnarray*}
where $W$ is a standard two-sided Brownian motion starting from $0$.
\end{thm}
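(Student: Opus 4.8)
\emph{Proof proposal.} The plan is to derive the limit law by the classical route: the \emph{switching relation} for the greatest convex minorant, a local rescaling around $t_0$, an invariance principle for the localized cumulative sum process, the argmax continuous mapping theorem, and a final Brownian scaling to reach canonical form. Write $\mathbb H_n(t) = n^{-1}\sum_{i=1}^n 1_{[T_i \le t]}$ and $\mathbb G_n(t) = n^{-1}\sum_{i=1}^n \Delta_i 1_{[T_i \le t]}$; as recalled above, the cumulative sum diagram describing $\hat F_n$ is exactly $\{(\mathbb H_n(T_{(i)}), \mathbb G_n(T_{(i)}))\}$ and $\hat F_n$ is the left derivative of its greatest convex minorant. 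The switching relation then gives, for each $a \in (0,1)$ and up to a null event coming from ties among the $T_i$,
$$
\hat F_n(t_0) \le a \ \Longleftrightarrow\ \hat U_n(a) \ge t_0, \qquad \hat U_n(a) := \max\, \argmin_{t}\bigl\{\mathbb G_n(t) - a\,\mathbb H_n(t)\bigr\}.
$$
Hence, with $a_n := F_0(t_0) + x\, n^{-1/3}$,
$$
\PP\bigl(n^{1/3}(\hat F_n(t_0) - F_0(t_0)) \le x\bigr) = \PP\bigl(\hat U_n(a_n) \ge t_0\bigr) = \PP\bigl(n^{1/3}(\hat U_n(a_n) - t_0) \ge 0\bigr),
$$
so it suffices to find the limiting law of $n^{1/3}(\hat U_n(a_n) - t_0)$.

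Next I would localize: writing $t = t_0 + n^{-1/3}s$ and dropping the (irrelevant) value at $s=0$, the quantity $n^{1/3}(\hat U_n(a_n) - t_0)$ is the largest minimizer over $s$ of
$$
Z_n(s) := n^{2/3}\Bigl[(\mathbb G_n - a_n\mathbb H_n)(t_0 + n^{-1/3}s) - (\mathbb G_n - a_n\mathbb H_n)(t_0)\Bigr].
$$
Splitting $Z_n$ into its centered empirical part and its deterministic drift, a Taylor expansion using the positive derivatives $f_0(t_0)$, $g_0(t_0)$ shows the drift converges uniformly on compacta to $g_0(t_0)\bigl(\tfrac12 f_0(t_0)s^2 - xs\bigr)$, while a local empirical-process (functional) central limit argument for the array of increments over the $O(n^{-1/3})$ window shows the empirical part converges to $\sigma W(s)$ with $\sigma^2 = g_0(t_0)F_0(t_0)(1-F_0(t_0))$ and $W$ a standard two-sided Brownian motion. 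Thus $Z_n \Rightarrow Z$ uniformly on compact sets, where $Z(s) = \sigma W(s) + g_0(t_0)\bigl(\tfrac12 f_0(t_0)s^2 - xs\bigr)$.

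To convert this into $\argmin Z_n \to_d \argmin Z$ I would invoke the argmax continuous mapping theorem, which needs two extra facts: (i) $n^{1/3}(\hat U_n(a_n) - t_0) = O_{\PP}(1)$ --- a rate-of-convergence/peeling estimate bounding, on each shell $|s| \in [2^{j-1}, 2^j]$, the increments of the empirical part of $Z_n$ (of order $2^{j/2}$) against the parabolic drift (of order $2^{2j}$), so that for large $j$ no minimum can occur there; and (ii) $Z$ has an a.s.\ unique minimizer, which holds because a two-sided Brownian motion plus a strictly convex parabola has a unique argmin. These give $n^{1/3}(\hat U_n(a_n) - t_0) \to_d \argmin_s Z(s)$, and since the latter has a Lebesgue density the boundary $\{=0\}$ is null, so $\PP(n^{1/3}(\hat F_n(t_0) - F_0(t_0)) \le x) \to \PP(\argmin_s Z(s) \ge 0)$. \emph{This peeling/tightness step is the main obstacle}: uniform convergence of $Z_n$ to $Z$ on all of $\RR$ fails, so the localization rate must be extracted shell by shell by balancing the growing parabolic drift against the modulus of continuity of the empirical process.

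Finally I would put the limit in canonical form. Completing the square, $g_0(t_0)(\tfrac12 f_0(t_0)s^2 - xs) = \tfrac12 g_0(t_0)f_0(t_0)\bigl(s - x/f_0(t_0)\bigr)^2 + \text{const}$; shifting the origin (using that $W(\cdot + c) - W(c)$ is again a standard two-sided Brownian motion) gives $\argmin_s Z(s) \stackrel{d}{=} x/f_0(t_0) + \argmin_u\{\sigma W(u) + \tfrac12 g_0(t_0)f_0(t_0) u^2\}$. The Brownian scaling $u = \gamma v$, $W(\gamma v) \stackrel{d}{=} \gamma^{1/2}W(v)$, with $\gamma$ chosen so that the parabola acquires unit coefficient, reduces this to $\gamma\,\ZZ$ with $\ZZ = \argmin_v\{W(v) + v^2\}$; a short computation gives $\gamma = c(F_0,G_0)/f_0(t_0)$ with $c(F_0,G_0)$ as in the statement, so $\argmin_s Z(s) \stackrel{d}{=} f_0(t_0)^{-1}\bigl(x + c(F_0,G_0)\ZZ\bigr)$. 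Using the symmetry $\ZZ \stackrel{d}{=} -\ZZ$,
$$
\PP\bigl(\argmin_s Z(s) \ge 0\bigr) = \PP\bigl(x + c(F_0,G_0)\ZZ \ge 0\bigr) = \PP\bigl(c(F_0,G_0)\,\ZZ \le x\bigr),
$$
which is exactly the asserted limit distribution of $n^{1/3}(\hat F_n(t_0) - F_0(t_0))$.
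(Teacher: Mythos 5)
The paper does not prove Theorem~\ref{GroeneboomPointwiseLimitDistributionRone}: it merely recalls it, attributed to \cite{Groeneboom:87} (see also \cite{MR1180321}), as background for the multivariate problem treated later. So there is no in-paper proof to compare your argument against.

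On its own merits, your sketch is the classical route (Groeneboom's original argument, later systematized in the cube-root asymptotics framework of Kim and Pollard and in \cite{MR1180321}), and the main computations check out. The switching relation, the localization $t=t_0+n^{-1/3}s$, the drift limit $g_0(t_0)\bigl(\tfrac12 f_0(t_0)s^2 - xs\bigr)$, the local variance $\sigma^2 = F_0(t_0)(1-F_0(t_0))g_0(t_0)$, the shift that removes the linear term in $s$, and the Brownian scaling all line up; in particular the scale factor $\gamma = \bigl(2\sigma/(g_0(t_0)f_0(t_0))\bigr)^{2/3}$ does equal $c(F_0,G_0)/f_0(t_0)$ with $c(F_0,G_0)$ as in the statement, and the final appeal to the symmetry $\ZZ \stackrel{d}{=} -\ZZ$ is the right way to pass from $\PP(\argmin \ge 0)$ to $\PP(c\,\ZZ \le x)$. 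You are also right to flag the shell-by-shell tightness estimate as the genuine technical content: that is exactly where the work is in any rigorous version. Two minor points worth tightening in a full write-up: (i) the switching relation should be stated carefully with respect to which endpoint of the argmin set is taken and whether inequalities are strict, since these conventions matter for the equivalence of events (they only differ on null sets when $G_0$ is continuous, so your parenthetical is correct but deserves a sentence); and (ii) the claim that the limit process $Z$ has an a.s.\ unique minimizer, and that this minimizer has a continuous distribution (needed for the $\{=0\}$ boundary to be null), should be cited or justified --- both are consequences of properties of $W(t)+t^2$ established in \cite{MR981568}.
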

\medskip

The distribution of $\ZZ$ has been studied in detail by 
\cite{MR981568}  
and computed by 
\cite{MR1939706}.  
\cite{BalabdaWell:12} show that the density $f_{\ZZ}$ of $\ZZ$ is log-concave.

\cite{MR1212164} (see also \cite{MR1739079}) obtained the following 
global rate result for $p_{\hat{F}_n}$.  
Recall that the Hellinger distance $h(p,q)$ between two densities with respect
to a dominating measure $\mu$ is given by 
$$
h^2 (p,q) = \frac{1}{2} \int \{ \sqrt{p} - \sqrt{q} \}^2 d \mu .
$$

\begin{prop} (van de Geer, 1993)
\label{VdGHellingerRateRone}
$ h( p_{\hat{F}_n} , p_{F_0}) = O_p (n^{-1/3})$.
\end{prop}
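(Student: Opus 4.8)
The plan is to follow the standard empirical-process route for global rates of maximum likelihood estimators: a \emph{basic inequality} turning the maximizing property of $\hat F_n$ into a bound on a Hellinger distance by an empirical-process increment, a \emph{bracketing entropy bound} for the function class that appears, and a \emph{peeling / maximal-inequality} argument that combines these into a rate. Two structural facts make this run cleanly here: the set of distribution functions on $\RR^+$ is a class of uniformly bounded monotone functions, hence has small entropy, and the current-status model is convex, since for each fixed $\delta\in\{0,1\}$ the density $p_{F,g_0}(\delta,t)$ is affine in $F$.

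For the basic inequality, set $\bar p_n = \tfrac12(p_{\hat F_n} + p_{F_0})$. Using concavity of $t\mapsto \PP_n\log\big((1-t)p_{F_0} + t\,p_{\hat F_n}\big)$ on $[0,1]$ together with convexity of the model (so this map is maximized at $t=1$), a by-now routine manipulation gives
\[
  h^2(\bar p_n, p_{F_0}) \ \lesssim\ (\PP_n - P_0)\, w_{\hat F_n}, \qquad w_F := \sqrt{\bar p / p_{F_0}} - 1,\quad \bar p := \tfrac12(p_F + p_{F_0}),
\]
where $\PP_n$ is the empirical measure and $P_0$ the law of $X=(\Delta,T)$ under $F_0$. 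Here $w_F \ge \tfrac1{\sqrt 2}-1$ since $\bar p\ge\tfrac12 p_{F_0}$, one has $P_0 w_F^2 = 2\,h^2(\bar p, p_{F_0})$ by a one-line computation, and the elementary pointwise inequality $(\sqrt a-\sqrt c)^2 \le 12(\sqrt{(a+c)/2}-\sqrt c)^2$ integrates to $h^2(p_F,p_{F_0})\le 12\,h^2(\bar p,p_{F_0})$. So it suffices to prove $h(\bar p_n,p_{F_0}) = O_P(n^{-1/3})$.

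Assume first, for clarity, that $F_0$ is bounded away from $0$ and $1$ on $\mathrm{supp}(G_0)$. Then the distribution functions, viewed in $L_2(G_0)$, form a class of monotone $[0,1]$-valued functions, so $\log N_{[\,]}(\eps,\cdot,L_2(G_0))\lesssim 1/\eps$ by the classical bracketing bound for monotone functions; and since under this assumption $F\mapsto w_F$ is uniformly Lipschitz from $L_2(G_0)$ to $L_2(P_0)$ with $\|w_F\|_\infty\le C$, brackets for $F$ transfer to brackets for $w_F$, giving $\log N_{[\,]}(\eps,\mathcal W_\delta,L_2(P_0))\lesssim 1/\eps$ for the uniformly bounded class $\mathcal W_\delta = \{w_F : h(\bar p,p_{F_0})\le\delta\}$, whose $L_2(P_0)$-diameter is $\lesssim\delta$. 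The bracketing maximal inequality then yields, with $J(\delta) := \int_0^\delta\sqrt{1+\log N_{[\,]}(\eps,\mathcal W_\delta,L_2(P_0))}\,d\eps \lesssim \int_0^\delta \eps^{-1/2}\,d\eps \asymp \delta^{1/2}$,
\[
  \E^*\sup_{w\in\mathcal W_\delta}\big|\sqrt n\,(\PP_n - P_0)w\big| \ \lesssim\ \phi_n(\delta) := \delta^{1/2} + \frac{1}{\delta\sqrt n}.
\]
Since $\phi_n(\delta)/\delta$ is decreasing, the standard rate lemma of empirical-process theory (e.g.\ \cite{MR1739079}) gives $r_n\,h(\bar p_n,p_{F_0}) = O_P(1)$ for any $r_n$ with $r_n^2\phi_n(1/r_n)\lesssim\sqrt n$; as $r_n^2\phi_n(1/r_n) = r_n^{3/2} + r_n^3 n^{-1/2}$, the choice $r_n = n^{1/3}$ is admissible, and with $h(p_{\hat F_n},p_{F_0})\le\sqrt{12}\,h(\bar p_n,p_{F_0})$ we conclude $h(p_{\hat F_n},p_{F_0}) = O_P(n^{-1/3})$.

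The step I expect to be the main obstacle is removing the assumption that $F_0$ is bounded away from $0$ and $1$: near any point where $F_0(t)\to 0$ or $F_0(t)\to 1$, both $w_F$ and the Lipschitz constant of $F\mapsto w_F$ blow up (through the $p_{F_0}$ in the denominator and the square root), so the uniform boundedness and clean bracket transfer used above fail. The substantive part of van de Geer's argument is exactly this: one splits $\RR^+$ into a central region where $F_0$ is bounded away from $0$ and $1$ and two shrinking tail regions, bounds the tail contributions directly from the explicit form of $p_{F,g_0}$ and the monotonicity of $F$, and applies the entropy / maximal-inequality argument only on the central region, obtaining the same $n^{-1/3}$ rate at the cost of extra bookkeeping.
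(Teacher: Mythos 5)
The paper does not actually prove Proposition~\ref{VdGHellingerRateRone}; it cites \cite{MR1212164} for the result and proves only the multivariate analog (Theorem~\ref{NewMultHellingerRateThm}), via the van de Geer truncation machinery in Lemma~\ref{VdGBasicLemma}. Your overall plan (basic inequality for a convex model, bracketing entropy of monotone functions, maximal-inequality/peeling rate theorem) is the same framework. Your transformed class $w_F=\sqrt{\bar p/p_{F_0}}-1$ is one of van de Geer's options; the paper works instead with $g_F=2p_F/(p_F+p_{F_0})$, which is uniformly bounded by $2$, and that uniform boundedness is what lets the bracketing transfer go through without the a priori assumption that $F_0$ is bounded away from $0$ and $1$. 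With your $w_F$, boundedness and the Lipschitz transfer of brackets already fail near the boundary of the support, so you need the bounded version (or an equivalent device) from the start, not just for the tails.

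The more substantive gap is in the last paragraph. You correctly identify the boundary behavior of $F_0$ as the main obstacle, but the ``split into a central region and two shrinking tails plus bookkeeping'' resolution you sketch is exactly the $\sigma$-truncation of Lemma~\ref{VdGBasicLemma}, and applied naively to $d=1$ it does \emph{not} give $n^{-1/3}$: the normalizing mass $Q_{\sigma(\epsilon)}(\mathcal X)\asymp \log(1/\epsilon)$ inflates the entropy to $\log N_{[\,]}(\epsilon,\mathcal G^{(conv)},L_2(P_0))\lesssim (\log(1/\epsilon))^{1/2}/\epsilon$, which after integrating and solving $r_n^2\phi_n(1/r_n)\asymp\sqrt n$ gives $n^{-1/3}(\log n)^{1/6}$ --- this is the $d=1$ value of $\gamma_d=(5d-4)/6$ and is strictly weaker than the claim. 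To get the clean $n^{-1/3}$, van de Geer's one-dimensional argument exploits the extra structure: for current status data, $g_F$ restricted to $\delta=1$ equals $2F/(F+F_0)$ and is monotone nondecreasing in $F$ pointwise (and analogously, monotone nonincreasing for the $\delta=0$ piece), so brackets for the \emph{transformed} class can be built directly in $L_2(P_0)$ without passing through the unnormalized measure $Q_\sigma$, keeping the entropy at $\lesssim 1/\epsilon$. That monotone-in-$F$ bracket construction, not generic truncation plus bookkeeping, is the step your proposal leaves out and the one that makes the logarithm disappear in dimension one.
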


\par\noindent
Now for any distribution functions $F$ and $F_0$ the (squared) Hellinger distance $h^2 (p_F , p_{F_0})$ 
for the current status model is given by 
\begin{eqnarray}
h^2 (p_F , p_{F_0}) 
&= & \frac{1}{2} \left \{ \int ( \sqrt{F} - \sqrt{F_0} )^2 dG_0 + \int ( \sqrt{1-F} - \sqrt{1-F_0} )^2 d G_0  \right \}  \nonumber \\
& = &  \frac{1}{2} \int \frac{\{ ( \sqrt{F} - \sqrt{F_0} )(\sqrt{F} + \sqrt{F_0}) \}^2}{ (\sqrt{F} + \sqrt{F_0})^2} dG_0 \nonumber \\
&& \ \ + \  \frac{1}{2} \int \frac{\{ ( \sqrt{1-F} - \sqrt{1-F_0} )(\sqrt{1-F} + \sqrt{1-F_0}) \}^2}{ (\sqrt{1-F} + \sqrt{1-F_0})^2} dG_0
               \nonumber \\
& \ge & \frac{1}{8} \int (F - F_0 )^2 d G_0 +  \frac{1}{8} \int ((1-F)- (1-F_0) )^2 d G_0 \nonumber \\
& = & \frac{1}{4}  \int (F - F_0 )^2 d G_0 ,  \label{L2-Hellinger-Inequal}
\end{eqnarray}
and hence 
Proposition~\ref{VdGHellingerRateRone} yields
\begin{eqnarray}
\int_0^\infty ( \hat{F}_n (z) - F_0(z) )^2 d G_0 (z) = O_p (n^{-2/3}) ,
\end{eqnarray}
or $\| \hat{F}_n - F_0 \|_{L_2 (G_0)} = O_p (n^{-1/3})$.
\medskip

For generalizations of these and other asymptotic results 
for the current status model to more complicated interval censoring schemes
for real-valued random variables $Y$, see e.g. 
\cite{MR1180321},  
\cite{MR1212164},  
\cite{MR1600884},  
\cite{MR1739079},  
\cite{MR1774042},  
and \cite{MR2418648, MR2418649}.  

Our main focus in this paper, however, concerns one simple generalization of the 
interval censoring model for $\RR$ introduced above to interval censoring in $\RR^d$.  
We now turn to this generalization.

\section{Multivariate interval censoring:  multivariate current status data}
\label{sec:MultIntervalCensoring}

Let $\underline{Y} = (Y_1, \ldots , Y_d) \sim F_0$ on $\RR^{+d} \equiv [0,\infty)^d$,
and let $\underline{T} = (T_1, \ldots , T_d) \sim G_0 $ on $\RR^{+d}$ be independent of $\underline{Y}$.
We assume that $G_0$ has density $g_0$ with respect to some dominating measure $\mu$ on $\RR^d$.
Suppose we observe $\underline{X}_1, \ldots , \underline{X}_n$ i.i.d. as 
$\underline{X} = ( \underline{\Delta}, \underline{T} )$ where $\underline{\Delta} = (\Delta_1 , \ldots , \Delta_d)$ 
is given by $\Delta_j = 1_{[Y_j \le T_j ]}$, $j =1, \ldots , d$.  
Equivalently, with a slight abuse of notation,  $\underline{X} = (\underline{\Gamma}, \underline{T} )$ 
where $\underline{\Gamma} = ( \Gamma_1 , \ldots , \Gamma_{2^d} )$
is a vector of length $2^d$ consisting of $0$'s and $1$'s and with at most one $1$ 
which indicates into which of the $2^d$ orthants of $\RR^{+d}$ determined by 
$\underline{T}$ the random vector $\underline{Y}$ 
belongs.  More explicitly, define $K\equiv 1+\sum_{j=1}^d (1-\Delta_j) 2^{j-1}$.  Then set 
$\Gamma_k \equiv 1\{ k =K  \}$ for $k=1,\ldots , 2^d$, so that $\Gamma_K = 1$ and $\Gamma_{l} = 0$ for 
$l \in \{ 1, \ldots , 2^d \} \setminus \{ K \}$.  Much as for univariate current status data, 
$\underline{Y}$ represents a vector of times to events, $\underline{T}$ is a vector of observation times, 
and the goal is nonparametric estimation of the joint distribution function $F_0$ of $\underline{Y}$ based on observation 
of the $\underline{X}_i$'s.  
See \cite{MR1891046},  
\cite{MR2416109},   
\cite{MR2736679},   
and \cite{MR2818097}  
for examples of settings in which data of this type arises.

To calculate the likelihood, we first calculate the distribution of $\underline{X}$
for a general distribution function $F$:  note that the conditional distribution of $\underline{\Gamma}$ 
conditional on $\underline{T}$ is Multinomial:
$$
( \underline{\Gamma} | \underline{T} ) \sim \mbox{Mult}_{2^d} ( 1, \underline{p}(\underline{T}; F))
$$
where $\underline{p}(\underline{T};F) = (p_1 (\underline{T};F) , \ldots , p_{2^d} (\underline{T};F))$ and the probabilities 
$p_j (\underline{t};F)$, $j = 1, \ldots , 2^d$, $\underline{t} \in \RR^{+d}$ are determined 
by the $F$ measures of the corresponding sets.  
Then our model ${\cal P}$ for multivariate current status data 
 is the collection of all densities with respect to the dominating measure 
$(\mbox{counting measure on} \ \{ 0,1\}^{2^d}) \times \mu$ given by 
$$
\prod_{j=1}^{2^d} p_j (\underline{t};F)^{\gamma_j} g_0 (\underline{t}) 
$$
for some distribution function $F$ on $\RR^{+d}$ where 
$\underline{t} \in \RR^{+d}$ and $\gamma_j \in \{ 0, 1\}$ with $\sum_{j=1}^{2^d} \gamma_j = 1$. 

Now the part of the log-likelihood that depends on $F$ is given by
\begin{eqnarray*}
l_n (F) = \sum_{i=1}^n \sum_{j=1}^{2^d} \Gamma_{i,j} \log p_j (\underline{T}_i ; F),
\end{eqnarray*}
and again the MLE  $\hat{F}_n$ of the true distribution function $F_0$ is given by
\begin{eqnarray}
\hat{F}_n = \mbox{argmax} \{ l_n (F) : \ F \ \mbox{is a distribution function on} \ \RR^{+d} \}.
\label{MultivariateIntCensMLE}
\end{eqnarray}

For example, when $d=2$, we can write 
$\Gamma_1 = \Delta_1 \Delta_2$, 
$\Gamma_2 = (1-\Delta_1)\Delta_2$,
$\Gamma_3 = \Delta_1 (1-\Delta_2)$,  and 
$\Gamma_4 = (1-\Delta_1)(1-\Delta_2)$, and then 
\begin{eqnarray*}
&& p_1 (\underline{T};F) = F(T_1, T_2), \\
&& p_2 (\underline{T};F) = F(\infty, T_2) - F(T_1, T_2) , \\
&& p_3 (\underline{T};F) = F(T_1, \infty) - F(T_1 , T_2) , \\
&& p_4 (\underline{T};F) = 1- F(T_1 , \infty) - F(\infty, T_2 ) + F(T_1, T_2) .
\end{eqnarray*}
Thus 
$$
P_{F} ( \underline{\Gamma} = \underline{\gamma} | \underline{T}) 
       = \prod_{j=1}^4 p_j (\underline{T}; F)^{\gamma_j} , \ \ \mbox{for} 
\ \ \underline{\gamma} = (\gamma_1 , \gamma_2 ,\gamma_3 , \gamma_4), \ \ 
\gamma_j \in \{ 0,1 \}, \ \sum_{j=1}^4 \gamma_j = 1 .
$$
Note that 
\begin{eqnarray}
p_j (\underline{t}; F) = \int_{[0, \infty)^2} 1_{C_j (\underline{t})} (\underline{y}) dF(\underline{y}), \qquad j = 1, \ldots , 4
\label{StructureOfTheCellProbabilities}
\end{eqnarray}
where 
\begin{eqnarray*}
&& C_1 (\underline{t}) = [0,t_1]\times [0,t_2], \\
&& C_2 (\underline{t}) =  [0,t_1]\times (t_2, \infty) ,\\
&& C_3 (\underline{t}) = (t_1, \infty) \times [0,t_2] ,\\
&& C_4 (\underline{t}) = (t_1 , \infty) \times (t_2, \infty) .
\end{eqnarray*}

Characterizations and computation of the MLE (\ref{MultivariateIntCensMLE}), mostly 
for the case $d=2$ have
been treated in 
\cite{Song/PHD},    
\cite{MR1964427},  
and 
\cite{MR2160818, MR2708977}.  
Consistency of the MLE for more general interval censoring models 
has been established by \cite{MR2236498}.
For an interesting application see   
\cite{Betensky-Fink:99}.    
This example and other examples of multivariate interval censored data
are treated in \cite{MR2287318}   
and and \cite{MR2489672}.   
For a comparison of the MLE with alternative estimators in the case $d=2$, see 
\cite{Groeneboom:12}.  

An analogue of Groeneboom's Theorem~\ref{GroeneboomPointwiseLimitDistributionRone} 
has not been established in the multivariate case.
\cite{Song/PHD} established an asymptotic minimax lower bound for pointwise convergence when $d=2$:   if
$F_0$ and $G_0$ have positive continuous densities at $\underline{t}_0$, then no estimator has a local minimax
rate for estimation of $F_0 (\underline{t}_0)$ faster than $n^{-1/3}$.  
By making use of additional smoothness hypotheses,
\cite{Groeneboom:12} has constructed estimators which achieve the pointwise  $n^{-1/3}$
rate, but it is not yet known if the MLE achieves this.

Our main goal here is to prove the following theorem concerning the global rate of convergence of the 
MLE $\hat{F}_n$.
 
\begin{thm}
\label{NewMultHellingerRateThm} 
Consider the multivariate current status model.
Suppose that $F_0$ has $\mbox{supp} (F_0) \subset [0,M]^d$ and that $F_0$ has density $f_0$ 
which satisfies
\begin{eqnarray}
c_1^{-1} \le f_0 (\underline{y}) \le c_1 \ \ \mbox{for all} \ \ \underline{y} \in [0,M]^d
\label{ConditionOne}
\end{eqnarray}
where $0< c_1 < \infty$.  Suppose that $G_0$ has density $g_0$ which 
satisfies
\begin{eqnarray}
c_2^{-1} \le g_0 (\underline{y}) \le c_2 \ \ \mbox{for all} \ \ \underline{y} \in [0,M]^d.
\label{ConditionTwo}
\end{eqnarray} 
Then the MLE $\widehat{p}_n \equiv  p_{\widehat{F}_n}$ of $p_0\equiv p_{F_0}$ satisfies 
\begin{eqnarray*}
h( \widehat{p}_n , p_0 ) = O_p \left ( \frac{ (\log n)^{\gamma}}{n^{1/3}} \right ) 
\end{eqnarray*}
for  $\gamma \equiv \gamma_d \equiv (5d-4)/6$.
\end{thm}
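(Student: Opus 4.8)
\medskip
\noindent\emph{Plan of proof.}\ \
The approach is to combine the bracketing--entropy technology for Hellinger rates of maximum likelihood estimators, as used for the univariate current status problem by \cite{MR1212164} (see also \cite{MR1739079}), with the bracketing entropy bounds for $d$-dimensional distribution functions from \cite{Gao:12}. As a first, routine, reduction one replaces $p_F$ by $\bar p_F\equiv(p_F+p_0)/2$: this is forced on us because the cell probabilities $p_j(\underline t;F)$ of \eqref{StructureOfTheCellProbabilities} may vanish, so that $\log(p_0/p_F)$ need not be integrable while $\log(p_0/\bar p_F)$ is bounded; since $\{\sqrt{p_F}\}$ and $\{\sqrt{\bar p_F}\}$ have equivalent bracketing numbers, the basic inequality for the MLE reduces everything to a maximal inequality for the empirical process indexed by $\{\sqrt{\bar p_F}: F \text{ a distribution function on } [0,M]^d\}$. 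Consequently it suffices to prove
\begin{equation}
\log N_{[\,]}\bigl(\epsilon,\mc P,h\bigr)\ \lesssim\ \epsilon^{-1}\bigl(\log(1/\epsilon)\bigr)^{(5d-4)/2},\qquad 0<\epsilon<\tfrac12. \label{eq:target-entropy}
\end{equation}
Indeed \eqref{eq:target-entropy} gives $\int_0^\delta\sqrt{1+\log N_{[\,]}(u,\mc P,h)}\,du\lesssim\delta^{1/2}(\log(1/\delta))^{(5d-4)/4}$, and the rate $\delta_n$ produced by the general theorem then solves $\sqrt n\,\delta_n^2\asymp\delta_n^{1/2}(\log(1/\delta_n))^{(5d-4)/4}$, i.e.\ $\sqrt n\,\delta_n^{3/2}\asymp(\log(1/\delta_n))^{(5d-4)/4}$, whose solution is $\delta_n\asymp n^{-1/3}(\log n)^{(5d-4)/6}$ --- the assertion of Theorem~\ref{NewMultHellingerRateThm}. (The side condition needed for the rate theorem, that $u\mapsto u^{-2}\int_0^u\sqrt{1+\log N_{[\,]}(v,\mc P,h)}\,dv$ be non-increasing, is automatic for an entropy of this form.)

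To prove \eqref{eq:target-entropy} the starting point is the identity
\[
h^2(p_F,p_{F_0})\ =\ \tfrac12\int_{[0,M]^d} g_0(\underline t)\sum_{j=1}^{2^d}\bigl(\sqrt{p_j(\underline t;F)}-\sqrt{p_j(\underline t;F_0)}\bigr)^2\,d\mu(\underline t),
\]
valid because $g_0$ is a common dominating factor. Conditions \eqref{ConditionOne}--\eqref{ConditionTwo} enter in two ways. First, using $(\sqrt a-\sqrt b)^2\ge\tfrac14(a-b)^2$ for $a,b\in[0,1]$ together with $p_1(\underline t;F)=F(\underline t)$, one obtains $h^2(p_F,p_{F_0})\gtrsim\|F-F_0\|_{L_2(G_0)}^2$, which extends \eqref{L2-Hellinger-Inequal}, pins $F$ to an $O(\delta)$ neighbourhood of $F_0$ once $h(p_F,p_0)\le\delta$, and would also translate the Hellinger rate into an $L_2(G_0)$ rate for $\widehat F_n$. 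Second, and crucially, since $f_0$ and $g_0$ are bounded above and below on $[0,M]^d$, the sub-level sets of the maps $\underline t\mapsto p_j(\underline t;F_0)$ are ``hyperbolic'' near $\partial[0,M]^d$: the set $\{\underline t: p_j(\underline t;F_0)\le\eta\}$ has $G_0$-measure only of order $\eta\,(\log(1/\eta))^{d-1}$. This polylogarithmically thin boundary layer is what keeps the rate within a power of $\log n$ of $n^{-1/3}$.

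The bound \eqref{eq:target-entropy} is then obtained by a bracketing construction that peels both the $\underline t$-domain and the function $F$ itself according to the size of the cell probabilities. Because $u\mapsto\sqrt u$ and $F\mapsto p_j(\underline t;F)$ are monotone, any bracket $[F^L,F^U]\ni F$ for the class $\mc F_d$ of distribution functions on $[0,M]^d$ induces a Hellinger bracket $[\underline{\sqrt{p^L}},\underline{\sqrt{p^U}}]$ for $\underline{\sqrt{p(\cdot;F)}}$ of squared $h$-size $\tfrac12\int g_0\sum_j(\sqrt{p_j^U}-\sqrt{p_j^L})^2$. On the bulk $\{p_j(\cdot;F_0)\gtrsim c\}$ one has $(\sqrt{p_j^U}-\sqrt{p_j^L})^2\lesssim(p_j^U-p_j^L)^2$, so an $L_2(G_0)$-bracketing of $\mc F_d$ at scale $\asymp\epsilon$ does the job there; on the dyadic shells $\{p_j(\cdot;F_0)\asymp 2^{-k}\}$ one uses instead the cruder bound $(\sqrt{p_j^U}-\sqrt{p_j^L})^2\le|p_j^U-p_j^L|$, and since by the hyperbolicity these shells have $G_0$-measure $\asymp 2^{-k}k^{d-1}$, the shells with $k\gtrsim\log(1/\epsilon)$ can be covered crudely (their measure is already $\lesssim\epsilon^2$) while the remaining $O(\log(1/\epsilon))$ shells each cost only a bracketing of $\mc F_d$ at a fixed power of $\epsilon$, after a threshold truncation of $F$ adapted to the shell. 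Inserting the bracketing entropy estimate for $d$-dimensional distribution functions (and the closely related square-root classes) --- precisely, the corollary of \cite{Gao:12} stated and proved in the Appendix (Section~\ref{sec:Appendix}), of the order $\epsilon^{-1}(\log(1/\epsilon))^{2(d-1)}$ --- and summing the costs over the $O(\log(1/\epsilon))$ scales and the $2^d$ cells produces \eqref{eq:target-entropy}.

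I expect the genuine difficulty to lie in this last construction: organizing the brackets so that the boundary shells and the bulk are covered by essentially a single family of brackets for $\mc F_d$ --- so that the boundary layer truly costs only extra logarithmic, not polynomial, factors in $\epsilon^{-1}$ --- and then carrying out the bookkeeping, which combines the exponent from \cite{Gao:12}, the $(\log(1/\epsilon))^{d-1}$ from the boundary-layer volumes, the $\log(1/\epsilon)$ from the number of active scales, and an interpolation between $L_1$- and $L_2$-brackets on the shells, so that the final exponent lands exactly on $(5d-4)/2$, equivalently $\gamma_d=(5d-4)/6$. Once \eqref{eq:target-entropy} is established, the passage to $\bar p_F$, the verification of the side condition on the entropy integral, and the invocation of the Hellinger-rate theorem are all standard.
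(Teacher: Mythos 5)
Your plan has the right ingredients --- Gao's bracketing bound for $d$-dimensional distribution functions, the observation that the ``small $p_0$'' region is hyperbolically thin (Lebesgue/Hellinger measure $\eta(\log(1/\eta))^{d-1}$), and the final rate arithmetic $\sqrt n\,\delta_n^{3/2}\asymp(\log(1/\delta_n))^{(5d-4)/4}\Rightarrow\delta_n\asymp n^{-1/3}(\log n)^{(5d-4)/6}$ --- but the central step, the bracketing bound \eqref{eq:target-entropy}, is sketched rather than proved, and you acknowledge this yourself (``the genuine difficulty lies in this last construction \dots and then carrying out the bookkeeping''). That bookkeeping is where the theorem actually lives, and your dyadic-shell peeling does not obviously land on the exponent $(5d-4)/2$: summing $O(\log(1/\epsilon))$ shells, each with $G_0$-measure $\asymp 2^{-k}k^{d-1}$, each costing a fresh bracketing of $\mathcal F_d$, plus an $L_1$/$L_2$ interpolation, is precisely the kind of accumulation that can easily produce a larger power of $\log(1/\epsilon)$ unless the brackets across shells are coordinated. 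You have also not verified that an $L_2(\lambda_d)$ or $L_p(\lambda_d)$ bracketing of $\mathcal F_d$ (which is what \cite{Gao:12} gives) actually controls brackets in the shell-restricted measures you need; this transfer is nontrivial.

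The paper's route avoids both difficulties and is worth contrasting. First, instead of Hellinger brackets for $\{\bar p_F\}$, it works with van de Geer's class $\mathcal G^{(conv)}=\{2p/(p+p_0)\}$ in $L_2(P_0)$ (Lemma~\ref{VdGBasicLemma}). The crucial gain is that the ``bad set'' $\{p_0\le\sigma\}$ is negligible in $P_0$-measure at order $\sigma$ (Lemma~\ref{BoundsForSmallValuesDensity}), \emph{without} the extra $(\log(1/\sigma))^{d-1}$ you encounter in Lebesgue measure; so a single truncation at $\sigma(\epsilon)\asymp\epsilon^2$ suffices and no multi-scale peeling is needed. Second, Lemma~\ref{VdGBasicLemma} reduces everything to bracketing $\mathcal F_d$ in $L_2(\tilde Q_{\sigma(\epsilon)})$ where $\tilde Q_\sigma$ is proportional to $p_0^{-1}\mathbf 1\{p_0>\sigma\}d\mu$, whose total mass is $\asymp(\log(1/\sigma))^d$ (Lemma~\ref{MassOfQsigma}) --- this is where the extra $(\log(1/\epsilon))^{d/2}$ enters. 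The remaining and essential step is Corollary~\ref{CorollariesOfGaosThm}(b): a H\"older argument plus the explicit monotone change of variables $u_j=(\log(t_j/\sigma)/\log(1/\sigma))^d$ shows that Gao's $L_p(\lambda_d)$ bound transfers to $L_2(\tilde Q_\sigma)$ \emph{uniformly in $\sigma$}, still at order $\epsilon^{-1}(\log(1/\epsilon))^{2(d-1)}$. Multiplying, $d/2+2(d-1)=(5d-4)/2$. In short: the $(5d-4)/2$ comes out cleanly as ``normalization of $Q_\sigma$ ($d/2$) plus Gao ($2(d-1)$)'', not from a shell-by-shell tally. To repair your argument you would need to supply the analogue of Corollary~\ref{CorollariesOfGaosThm} (a uniform-in-$\sigma$ change-of-measure bracketing bound) or else carefully organize the shell brackets so the boundary layers do not multiply; as written, the proposal leaves the theorem unproved.
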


Since the inequality (\ref{L2-Hellinger-Inequal}) continues to hold in $\RR^d$ for $d\ge 2$ 
(with $1/4$ replaced by $1/8$ on the right side),  we obtain the 
following corollary:
  
\begin{cor}
\label{L2-rateCorollaryMultCase}
Under the conditions of Theorem~\ref{NewMultHellingerRateThm} it follows that 
\begin{eqnarray*}
\int_{\RR^{+d}} ( \widehat{F}_n (z) - F_0 (z) )^2 d G_0 (z) = O_p ( n^{-2/3} (\log n)^{\beta} ) 
\end{eqnarray*}
for  $\beta \equiv \beta_d = 2 \gamma_d = (5d- 4)/3$.
\end{cor}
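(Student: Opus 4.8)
The plan is to obtain the corollary directly from Theorem~\ref{NewMultHellingerRateThm}, the only extra ingredient being the extension of the chain of inequalities (\ref{L2-Hellinger-Inequal}) from $d=1$ to $d\ge2$.

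First I would record the multivariate analogue of (\ref{L2-Hellinger-Inequal}). For any distribution function $F$ on $\RR^{+d}$, summing the contributions of the $2^d$ possible outcomes of $\underline{\Gamma}$, the squared Hellinger distance in the multivariate current status model is
\begin{eqnarray*}
h^2(p_F, p_{F_0}) = \frac12 \sum_{j=1}^{2^d} \int_{\RR^{+d}} \Bigl( \sqrt{p_j(\underline{t}; F)} - \sqrt{p_j(\underline{t}; F_0)} \Bigr)^2 \, dG_0(\underline{t}).
\end{eqnarray*}
Every orthant probability $p_j(\underline{t};F)$ lies in $[0,1]$, so the elementary bound $(\sqrt a - \sqrt b)^2 = (a-b)^2/(\sqrt a + \sqrt b)^2 \ge \tfrac14(a-b)^2$ applies to each summand; discarding all terms except $j=1$ and using $p_1(\underline{t};F) = F(\underline{t})$ then gives
\begin{eqnarray*}
h^2(p_F, p_{F_0}) \;\ge\; \frac18 \int_{\RR^{+d}} \bigl( F(\underline{t}) - F_0(\underline{t}) \bigr)^2 \, dG_0(\underline{t}),
\end{eqnarray*}
which is (\ref{L2-Hellinger-Inequal}) with $1/4$ replaced by $1/8$. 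The halving of the constant is a genuine feature of $d\ge2$: among the $2^d$ orthant probabilities only the one attached to $[0,t_1]\times\cdots\times[0,t_d]$ equals $F$ evaluated at $\underline{t}$ (the others also involve strict lower-dimensional marginals of $F$), so only one summand of the Hellinger sum can be bounded below by a multiple of $(F-F_0)^2$.

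Finally I would apply this with $F = \widehat{F}_n$ and insert the Hellinger rate supplied by Theorem~\ref{NewMultHellingerRateThm}:
\begin{eqnarray*}
\int_{\RR^{+d}} \bigl( \widehat{F}_n(z) - F_0(z) \bigr)^2 \, dG_0(z) \;\le\; 8\, h^2(\widehat{p}_n, p_0) \;=\; O_p\!\left( \frac{(\log n)^{2\gamma_d}}{n^{2/3}} \right),
\end{eqnarray*}
which is precisely the claim, with $\beta_d = 2\gamma_d = (5d-4)/3$. I do not expect any real obstacle here: all of the work is contained in Theorem~\ref{NewMultHellingerRateThm}, and beyond it the only point needing slight care is to check that the one-line chain (\ref{L2-Hellinger-Inequal}) still closes when $d\ge2$ and that its leading constant degrades only from $1/4$ to $1/8$.
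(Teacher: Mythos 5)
Your proposal is correct and matches the paper's (tersely stated) argument: drop all but the $j=1$ orthant term in the multivariate Hellinger distance, apply the elementary bound $(\sqrt a-\sqrt b)^2\ge\tfrac14(a-b)^2$ for $a,b\in[0,1]$ to get the constant $1/8$, and invoke Theorem~\ref{NewMultHellingerRateThm}. The paper simply asserts that (\ref{L2-Hellinger-Inequal}) holds with $1/4$ replaced by $1/8$ in dimension $d\ge 2$; you have supplied exactly the short verification it leaves implicit, including the correct explanation of why only the $C_1(\underline t)$ orthant can be used.
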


\section{Proofs}  
\label{sec:proofs}

Here we give the proof of Theorem~\ref{NewMultHellingerRateThm}.  
The main tool is a method developed by 
\cite{MR1739079}.  We will use the following lemma in combination with 
Theorem 7.6 of \cite{MR1739079} or Theorem 3.4.1 of \cite{MR1385671} (Section 3.4.2, pages 330-331).
Without loss of generality we can take $M=1$ where $M$ is the upper bound of the support of $F$ 
(see Theorem~\ref{NewMultHellingerRateThm}).  

Let ${\cal P}$ be a collection of probability densities $p$ on a sample space ${\cal X}$ with respect to 
a dominating measure $\mu$.  Define
\begin{eqnarray}
&& {\cal G}^{(conv)} \equiv \left \{ \frac{2 p}{p+p_0} : \ p \in {\cal P} \right  \} ,
      \label{CalGDefn} \\
&& \sigma( \delta ) \equiv \sup \{ \sigma \ge 0 : \ \ \int_{\{ p_0 \le \sigma \}} p_0 d \mu \le \delta^2 \} 
\qquad \mbox{for} \ \ \delta > 0 ,  
      \label{SigmaSubDeltaDefn}\\
&& {\cal G}_{\sigma}^{(conv)} \equiv \left  \{ \frac{2 p}{p+p_0}  1_{[p_0 > \sigma]} : \ p \in {\cal P} \right \} , 
\qquad \mbox{for} \ \ \sigma > 0 .
     \label{CalGSubSigmaDefn}
\end{eqnarray}
The following general result relating the bracketing entropies $\log N_{[\,]} (\cdot , {\cal G}^{(conv)} , L_2 (P_0))$,
$\log N_{[\,]} (\cdot , {\cal G}_{\sigma(\epsilon)}^{(conv)} , L_2 (P_0))$,
$\log N_{[\,]} (\cdot , {\cal P}_ , L_2 (Q_{\sigma(\epsilon)} ))$,
and
$\log N_{[\,]} (\cdot , {\cal P}_ , L_2 (\tilde{Q}_{\sigma(\epsilon)} ))$
is due to \cite{MR1739079}.

\begin{lem} (van de Geer, 2000)
\label{VdGBasicLemma}
For every $\epsilon > 0$
\begin{eqnarray}
\log N_{[\, ]} ( 3 \epsilon , {\cal G}^{(conv)} , L_2 (P_0) )
& \le & \log N_{[\, ]} ( \epsilon , {\cal G}_{\sigma(\epsilon)}^{(conv)} , L_2 (P_0)) 
              \label{VDG_First_Inequality} \\
& \le & \log N_{[\, ]} ( \epsilon/2 , {\cal P}, L_2 ( Q_{\sigma(\epsilon)} ) ) 
               \label{VDG_Second_Inequality} \\
& = & \log N_{[\, ]} \left ( \frac{\epsilon/2}{\sqrt{Q_{\sigma(\epsilon)} ({\cal X} )}} , 
              {\cal P}, L_2 (\tilde{Q}_{\sigma(\epsilon)} ) \right ) 
                \label{VDG_Third_Equality} 
\end{eqnarray}
where $dQ_{\sigma} \equiv p_0^{-1} 1_{[p_0 > \sigma]} d \mu$ and 
$\tilde{Q}_{\sigma} \equiv Q_{\sigma} / Q_{\sigma} ({\cal X} )$.
\end{lem}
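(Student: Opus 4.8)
\medskip
\noindent\emph{Proof proposal.} The plan is to establish the three relations (\ref{VDG_First_Inequality})--(\ref{VDG_Third_Equality}) separately, working upward from the last one. Fix $\epsilon > 0$ and write $\sigma = \sigma(\epsilon)$ throughout. The equality (\ref{VDG_Third_Equality}) is merely a renormalization of the measure: for any measurable $g \ge 0$ one has $\int g^2\, dQ_\sigma = Q_\sigma({\cal X}) \int g^2\, d\tilde Q_\sigma$, hence $\|\cdot\|_{L_2(Q_\sigma)} = \sqrt{Q_\sigma({\cal X})}\,\|\cdot\|_{L_2(\tilde Q_\sigma)}$ on functions. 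So a pair $[p^L,p^U]$ is an $\eta$-bracket for ${\cal P}$ in $L_2(Q_\sigma)$ precisely when it is an $\eta/\sqrt{Q_\sigma({\cal X})}$-bracket in $L_2(\tilde Q_\sigma)$, the two bracketing numbers therefore coincide, and taking $\eta = \epsilon/2$ gives (\ref{VDG_Third_Equality}).

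For (\ref{VDG_Second_Inequality}) I would push brackets of ${\cal P}$ through the map $\phi_c(s) := 2s/(s+c)$, which for each fixed $c > 0$ is nondecreasing on $[0,\infty)$ with values in $[0,2)$. Start from a minimal $(\epsilon/2)$-bracketing set for ${\cal P}$ in $L_2(Q_\sigma)$; since every $p \in {\cal P}$ is nonnegative, replacing each lower bracket function by its positive part does not enlarge any bracket, so assume $0 \le p^L \le p^U$. If $p^L \le p \le p^U$ then monotonicity of $\phi_{p_0}$, after multiplication by $1_{[p_0>\sigma]}$, gives
\[
\phi_{p_0}(p^L)\,1_{[p_0>\sigma]} \;\le\; \frac{2p}{p+p_0}\,1_{[p_0>\sigma]} \;\le\; \phi_{p_0}(p^U)\,1_{[p_0>\sigma]},
\]
a bracket for the generic element of ${\cal G}_\sigma^{(conv)}$. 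One computes $\phi_{p_0}(p^U) - \phi_{p_0}(p^L) = 2p_0(p^U-p^L)/\{(p^U+p_0)(p^L+p_0)\}$, which on $\{p_0>\sigma\}$ is at most $2(p^U-p^L)/p_0$ because $p^L,p^U \ge 0$ force the denominator above $p_0^2$; squaring and integrating against $p_0\, d\mu$ then yields
\[
\bigl\|\,(\phi_{p_0}(p^U)-\phi_{p_0}(p^L))\,1_{[p_0>\sigma]}\,\bigr\|_{L_2(P_0)}^2 \;\le\; 4\int (p^U-p^L)^2\, p_0^{-1}\, 1_{[p_0>\sigma]}\, d\mu \;=\; 4\,\|p^U-p^L\|_{L_2(Q_\sigma)}^2 ,
\]
so the transported brackets have $L_2(P_0)$-width at most $2\cdot(\epsilon/2)=\epsilon$, which is (\ref{VDG_Second_Inequality}).

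For (\ref{VDG_First_Inequality}) I would add the region $\{p_0\le\sigma\}$ back in. Every $g = 2p/(p+p_0)\in{\cal G}^{(conv)}$ lies in $[0,2]$ and satisfies $g\,1_{[p_0>\sigma]}\in{\cal G}_\sigma^{(conv)}$. Take a minimal $\epsilon$-bracketing set for ${\cal G}_\sigma^{(conv)}$ in $L_2(P_0)$; truncating each bracket function to $[0,2]$ and multiplying by $1_{[p_0>\sigma]}$ keeps it a bracket and does not enlarge it (all elements of ${\cal G}_\sigma^{(conv)}$ are $[0,2]$-valued and supported on $\{p_0>\sigma\}$), so assume each $[l,u]$ has these properties. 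Then $[\,l,\; u + 2\cdot 1_{[p_0\le\sigma]}\,]$ brackets the corresponding $g$: on $\{p_0>\sigma\}$ it reduces to $[l,u]$ and there $g = g\,1_{[p_0>\sigma]}$, while on $\{p_0\le\sigma\}$ it equals $[0,2]\ni g$. The extra width is
\[
\bigl\|\,2\cdot 1_{[p_0\le\sigma]}\,\bigr\|_{L_2(P_0)} \;=\; 2\Bigl(\int_{\{p_0\le\sigma\}} p_0\, d\mu\Bigr)^{1/2} \;\le\; 2\epsilon
\]
by the defining property (\ref{SigmaSubDeltaDefn}) of $\sigma = \sigma(\epsilon)$, so the new brackets have width at most $3\epsilon$, which is (\ref{VDG_First_Inequality}).

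I do not anticipate a deep obstacle: the argument is essentially the transport of brackets through a monotone transformation together with the bookkeeping of constants. The one genuinely substantive point is the middle step — recognizing that, after the harmless excision of $\{p_0\le\sigma\}$, the $L_2(P_0)$ geometry of ${\cal G}^{(conv)}$ is controlled by the reweighted measure $dQ_\sigma = p_0^{-1}1_{[p_0>\sigma]}\,d\mu$ on ${\cal P}$, which is exactly what keeps the loss at a fixed multiple ($2$) rather than something blowing up with $\sigma$. A minor measure-theoretic subtlety is whether $\int_{\{p_0\le\sigma(\epsilon)\}}p_0\,d\mu\le\epsilon^2$ holds at $\sigma=\sigma(\epsilon)$ itself and not only for $\sigma<\sigma(\epsilon)$; this is automatic when $\mu(\{p_0=\sigma(\epsilon)\})=0$ and otherwise costs only a slightly larger numerical constant.
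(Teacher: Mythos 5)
Your proof is correct and follows essentially the same approach as the paper's: renormalize for the equality, transport brackets through the monotone map $s\mapsto 2s/(s+p_0)$ and bound the width by $2/p_0$ on $\{p_0>\sigma\}$ for the middle inequality, and pad with $2\cdot 1_{[p_0\le\sigma]}$ (costing $2\epsilon$ by the definition of $\sigma(\epsilon)$) for the first. Your write-up of the first step is in fact slightly cleaner than the paper's, whose stated upper bracket $\tilde g_{U,j}=g\,1_{[p_0\le\sigma]}+g_{U,j}$ formally depends on $g$; replacing it by $2\cdot 1_{[p_0\le\sigma]}+g_{U,j}$ as you do removes that blemish without changing the argument, and your closing caveat about whether the supremum defining $\sigma(\epsilon)$ is attained is a fair technical remark but immaterial here since the paper's Lemma~\ref{BoundsForSmallValuesDensity} exhibits a concrete $\sigma(\delta)$ satisfying the required bound.
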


\begin{proof}
We first show that (\ref{VDG_First_Inequality}) holds.  Suppose that 
$\{ [g_{L,j} , g_{U,j} ] , \ j = 1, \ldots , m \}$ are $\epsilon$-brackets with respect to $L_2(P_0)$ 
for ${\cal G}_{\sigma (\epsilon)}^{(conv)}$ with 
$$  
{\cal G}_{\sigma (\epsilon)}^{(conv)}  \subset \bigcup_{j=1}^m  [g_{L,j} , g_{U,j} ] , \qquad 
m = N_{[\, ]} ( \epsilon , {\cal G}_{\sigma(\epsilon)}^{(conv)} , L_2 (P_0)) . 
$$
Then for $g \in {\cal G}^{(conv)}$, let $g_{\sigma} \equiv g 1_{[p_0 > \sigma]}$ be the corresponding 
element of ${\cal G}_{\sigma (\epsilon)}^{(conv)}$.  Suppose that 
$g_{\sigma} \in  [g_{L,j} , g_{U,j} ]$ for some $j \in \{ 1, \ldots , m \}$.  Then 
\begin{eqnarray*}
g = g 1_{[p_0 \le \sigma]} + g_{\sigma} 
\ \left \{ \begin{array}{l}  \le g 1_{[p_0 \le \sigma ]} + g_{U,j} \equiv \tilde{g}_{U,j} \\
                                      \ge 0 + g_{L,j} \equiv \tilde{g}_{L,j} ,
            \end{array} \right .
\end{eqnarray*}
where, by the triangle inequality, $ 0 \le g \le 2 $ for all $g \in {\cal G}^{(conv)}$, 
and the definition of $\sigma(\epsilon)$,
it follows that 
\begin{eqnarray*}
\big \| \tilde{g}_{U,j} - \tilde{g}_{L,j} \big \|_{P_0,2} \le \big \|{g}_{U,j} - {g}_{L,j} \big \|_{P_0,2}  + 2 \epsilon 
\le 3 \epsilon .
\end{eqnarray*}
Thus $\{ [ \tilde{g}_{L,j} , \tilde{g}_{U,j}] : \ j \in \{ 1, \ldots , m\} \}$ is a collection of 
$3\epsilon-$brackets for ${\cal G}^{(conv)}$ with respect to $L_2 (P_0)$ 
and hence (\ref{VDG_First_Inequality}) holds.

Now we show that (\ref{VDG_Second_Inequality}) holds.
Suppose that $ \{ [ p_{L,j}, p_{U,j} ] : \ j = 1, \ldots , m \}$
is a set of $\epsilon/2-$brackets with respect to $L_2 (Q_{\sigma} )$ for ${\cal P}$ with 
$$
{\cal P} \subset \bigcup_{j=1}^m  [ p_{L,j}, p_{U,j} ]  \qquad \mbox{and} \qquad 
m = N_{[\, ]} ( \epsilon/2 , {\cal P}, L_2 ( Q_{\sigma(\epsilon)} ) ) .
$$
Suppose $p \in [ p_{L,j}, p_{U,j} ] $ for some $j$.  Then, since 
\begin{eqnarray*}
\frac{2p}{p+p_0} 1_{[p_0 > \sigma]} \ \ \left \{ 
\begin{array}{l} \le \frac{2p_{U,j}}{p_{U,j} + p_0 } 1_{[p_0 > \sigma]} \equiv g_{U,j} , \\
                          \ \    \\
                         \ge \frac{2p_{L,j}}{p_{U,j} + p_0} 1_{[p_0 > \sigma ]} \equiv g_{L,j}  
\end{array} \right .
\end{eqnarray*}
where
\begin{eqnarray*}
\lefteqn{| g_{U,j} - g_{L,j} |  } \\
& = & \bigg | \frac{2p_{U,j}}{p_{U,j} + p_0 } 1_{[p_0 > \sigma]} - \frac{2p_{L,j}}{p_{U,j} + p_0 } 1_{[p_0 > \sigma]} \bigg | \\
& = &  \frac{2(p_{U,j} - p_{L,j})}{p_{L,j} + p_0 } 1_{[p_0 > \sigma]} 
 \le  \frac{2 | p_{U,j} - p_{L,j} |}{p_0 } 1_{[p_0 > \sigma]} . 
\end{eqnarray*}
Thus
\begin{eqnarray*}
\big \| g_{U,j} - g_{L,j} \|_{P_0 , 2} \le 2 \big \| p_{U,j} - p_{L,j} \big \|_{Q_{\sigma} , 2} \le \epsilon ,
\end{eqnarray*}
and hence $\{ [ g_{L,j}, g_{U,j} ] : \ j = 1, \ldots , m \}$ is a set of $\epsilon$-brackets with respect to 
$L_2 (P_0)$ for ${\cal G}_{\sigma}^{(conv)}$.   This shows that (\ref{VDG_Second_Inequality}) holds.

It remains only to show that (\ref{VDG_Third_Equality}) holds.  But this is easy since
$\| g \|_{Q_{\sigma},2}^2 = \| g \|_{\tilde{Q}_\sigma,2}^2 \cdot Q_{\sigma} ({\cal X} )$.

This lemma is based on
\cite{MR1739079}, pages 101 and 103.
Note that our constants differ slightly from those of van de Geer.
\end{proof}

\begin{lem}
\label{BoundsForFZeroF}
Suppose that $F_0$ has density $f_0$ 
which satisfies, for some $0 < c_1 < \infty$,
\begin{eqnarray}
\frac{1}{c_1} \le f_0 (\underline{y} ) \le c_1 \qquad 
\mbox{for all} \ \ 
\underline{y} \in [0,1]^d .
\label{BoundsForFZeroDensity}
\end{eqnarray}
Then $p_0$ (which we can identify with the vector $p_0 (\cdot , F_0)$) satisfies
\begin{eqnarray*}
&& p_{0,1} (\underline{t}; F_0) \left \{ \begin{array}{l} \le c_1 \prod_{j=1}^d t_j \\ \ge c_1^{-1} \prod_{j=1}^d t_j , \end{array} 
\right . \qquad \mbox{for all} \ \ \underline{t}\in [0,1]^d , \\
&& \vdots \\
&& p_{0,2^d} (\underline{t}; F_0) \left \{ 
   \begin{array}{l} \le c_1 \prod_{j=1}^d (1-t_j) \\ \ge c_1^{-1} \prod_{j=1}^d (1-t_j ), \end{array} 
\right . \qquad \mbox{for all} \ \ \underline{t} \in [0,1]^d .
\end{eqnarray*}
\end{lem}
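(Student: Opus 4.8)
The plan is to write each cell probability as an integral of the density $f_0$ over a product region carved out of $[0,1]^d$ by $\underline{t}$, and then to sandwich that integral between $c_1^{-1}$ and $c_1$ times the Lebesgue volume of the region. Concretely, recall from (\ref{StructureOfTheCellProbabilities}) and its evident $d$-dimensional analogue that there is a bijection between the index set $\{1,\dots,2^d\}$ and the subsets $S\subseteq\{1,\dots,d\}$ (the coordinates in which $\underline Y$ falls below $\underline T$, i.e.\ $\Delta_i=1$) under which
$$
p_{0,j}(\underline t; F_0)=\int_{C_j(\underline t)} f_0(\underline y)\,d\underline y,
\qquad
C_j(\underline t)=\prod_{i\in S}[0,t_i]\times\prod_{i\notin S}(t_i,\infty),
$$
with $S=\{1,\dots,d\}$ and $S=\emptyset$ giving the first and last cells in the statement.

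First I would use the standing support assumption $\operatorname{supp}(F_0)\subset[0,1]^d$ (with $M=1$ as reduced above): since $f_0$ vanishes off $[0,1]^d$, each factor $(t_i,\infty)$ may be replaced by $(t_i,1]$ without changing the integral, so that
$$
\operatorname{Leb}\big(C_j(\underline t)\cap[0,1]^d\big)=\prod_{i\in S}t_i\cdot\prod_{i\notin S}(1-t_i)
\qquad\text{for all }\underline t\in[0,1]^d.
$$
Then integrating the pointwise bound (\ref{BoundsForFZeroDensity}), $c_1^{-1}\le f_0\le c_1$, over $C_j(\underline t)\cap[0,1]^d$ yields
$$
c_1^{-1}\prod_{i\in S}t_i\prod_{i\notin S}(1-t_i)\ \le\ p_{0,j}(\underline t;F_0)\ \le\ c_1\prod_{i\in S}t_i\prod_{i\notin S}(1-t_i),
$$
which is exactly the asserted family of inequalities once $S$ is matched to the enumeration of orthants used in the statement.

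There is no substantive obstacle here; the two points needing care are purely bookkeeping. One is fixing the bijection $j\leftrightarrow S$ consistently with the listed ordering $p_{0,1},\dots,p_{0,2^d}$ — and since the lemma will be invoked later only through these product bounds rather than through the precise labeling, the correspondence can be chosen for convenience. The other is the harmless replacement of half-open (or unbounded) intervals by closed ones, which alters neither the integral of $f_0$ nor the relevant Lebesgue volume since the boundaries are null sets.
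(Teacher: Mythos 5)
Your proposal is correct and matches the paper's (very terse) proof exactly: both express each $p_{0,j}(\underline t;F_0)$ as the integral of $f_0$ over the orthant cell intersected with $[0,1]^d$ and then sandwich that integral using $c_1^{-1}\le f_0\le c_1$ together with the product form of the cell's Lebesgue volume. You simply spell out the bookkeeping (the $j\leftrightarrow S$ bijection and the replacement of $(t_i,\infty)$ by $(t_i,1]$) that the paper leaves implicit.
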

\begin{proof} 
This follows immediately from the general $d$ version of 
(\ref{StructureOfTheCellProbabilities}) and the assumption on $f_0$.
\end{proof}

These inequalities can also be written in the following compact form:  For $k=1+ \sum_{j=1}^d (1-\delta_j )2^{j-1}$
with $\delta_j \in \{ 0,1\}$, 
\begin{eqnarray*}
p_{0,k} (\underline{t}; F_0)  \left \{ \begin{array}{l} \le c_1 \prod_{j=1}^d t_j^{\delta_j} (1-t_j)^{1-\delta_j} \\
                                                             \ge c_1^{-1} \prod_{j=1}^d t_j^{\delta_j} (1-t_j)^{1-\delta_j} ,
                                     \end{array} \right . \qquad \mbox{for all} \ \ t \in [0,1]^d .
\end{eqnarray*}                       
  
\begin{lem} 
\label{BoundsForSmallValuesDensity}
Suppose that the assumption of Lemma~\ref{BoundsForFZeroF} 
holds.
Suppose, moreover, that 
$G_0$ has density $g_0$ 
which satisfies 
\begin{eqnarray}
\frac{1}{c_2} \le g_0 (\underline{y} ) \le c_2 \qquad 
\mbox{for all} \ \ 
\underline{y} \in [0,1]^d.
\label{BoundsForGZeroDensity} 
\end{eqnarray}
Then
\begin{eqnarray*}
\int_{[p_0 \le \sigma ]} p_0 d \mu \le 2^d (c_1 c_2)^2  \sigma .
\end{eqnarray*}
Furthermore,  with 
$\sigma (\delta ) \equiv  \delta^2 / (2^d (c_1 c_2)^2)$
we have 
$$
\int_{[p_0 \le \sigma( \delta )]} p_0 d \mu \le \delta^2 .
$$
\end{lem}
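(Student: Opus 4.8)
The plan is to split the integral over the $2^d$ coordinate cells and to bound each piece by playing the defining inequality $p_0 \le \sigma$ against the two-sided bounds of Lemma~\ref{BoundsForFZeroF}. With respect to the dominating measure (counting measure on the $2^d$ cell labels)$\,\times\,\mu$, the density $p_0$ equals $p_{0,k}(\underline t;F_0)\,g_0(\underline t)$ on the $k$-th cell, so
\begin{eqnarray*}
\int_{[p_0 \le \sigma]} p_0\, d\mu
  = \sum_{k=1}^{2^d} \int_{A_k} p_{0,k}(\underline t;F_0)\,g_0(\underline t)\, d\mu(\underline t), \qquad
  A_k \equiv \{\underline t : p_{0,k}(\underline t;F_0)\,g_0(\underline t) \le \sigma\}.
\end{eqnarray*}
Taking $M=1$, the model has $G_0$ concentrated on $[0,1]^d$, so $g_0$ vanishes $\mu$-a.e. off the cube and each integrand above is supported on $[0,1]^d$; hence it suffices to work on $A_k \cap [0,1]^d$, where Lemma~\ref{BoundsForFZeroF} is available.

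Next I would fix a cell $k$, let $\underline\delta = \underline\delta(k) \in \{0,1\}^d$ be its sign pattern (so $k = 1 + \sum_{j=1}^d (1-\delta_j)2^{j-1}$), and write $h_k(\underline t) \equiv \prod_{j=1}^d t_j^{\delta_j}(1-t_j)^{1-\delta_j}$. On $A_k \cap [0,1]^d$ the lower bound in the compact form of Lemma~\ref{BoundsForFZeroF}, together with $g_0 \ge c_2^{-1}$ from (\ref{BoundsForGZeroDensity}), gives $p_{0,k}(\underline t;F_0)\,g_0(\underline t) \ge (c_1 c_2)^{-1} h_k(\underline t)$; since the left-hand side is $\le \sigma$ on $A_k$, this forces $h_k(\underline t) \le c_1 c_2\, \sigma$ there. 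Feeding this back into the upper bounds $p_{0,k}(\underline t;F_0) \le c_1 h_k(\underline t)$ and $g_0 \le c_2$ yields the pointwise estimate $p_{0,k}(\underline t;F_0)\,g_0(\underline t) \le c_1 c_2\, h_k(\underline t) \le (c_1 c_2)^2\,\sigma$ on $A_k \cap [0,1]^d$. Integrating this constant over $A_k \cap [0,1]^d \subseteq [0,1]^d$ (whose $\mu$-measure is $1$) gives $\int_{A_k} p_{0,k}\,g_0\, d\mu \le (c_1 c_2)^2\sigma$, and summing over the $2^d$ cells produces $\int_{[p_0 \le \sigma]} p_0\, d\mu \le 2^d (c_1 c_2)^2 \sigma$, which is the first assertion. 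The ``furthermore'' is then immediate: with $\sigma = \sigma(\delta) = \delta^2/(2^d(c_1c_2)^2)$ the right-hand side equals $\delta^2$, so in particular the quantity $\sigma(\delta)$ of (\ref{SigmaSubDeltaDefn}) satisfies $\sigma(\delta) \ge \delta^2/(2^d(c_1c_2)^2)$.

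I do not expect a genuine obstacle here; the estimate falls out directly from Lemma~\ref{BoundsForFZeroF}. The only points demanding a little care are bookkeeping ones: keeping straight that the ``$d\mu$'' of the statement denotes the full product dominating measure and decomposing accordingly, and observing that $\{\underline t \notin [0,1]^d\}$ contributes nothing because $g_0$ is concentrated on the cube, so the (possibly non-Lebesgue, possibly unbounded) behaviour of $\mu$ away from $[0,1]^d$ is irrelevant and only $\mu([0,1]^d)=1$ is used. One could even avoid Lemma~\ref{BoundsForFZeroF} altogether — on $A_k$ the integrand is $\le \sigma$ by definition, so each cell contributes at most $\sigma\,\mu([0,1]^d) = \sigma$ and the total is at most $2^d\sigma \le 2^d(c_1c_2)^2\sigma$ since $c_1, c_2 \ge 1$ — but routing the argument through Lemma~\ref{BoundsForFZeroF} is what exhibits the dependence of the constant on $c_1$ and $c_2$ and is presumably what is intended.
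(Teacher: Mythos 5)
Your proof is correct and follows essentially the same route as the paper's: decompose over the $2^d$ cells, combine the lower bounds from Lemma~\ref{BoundsForFZeroF} and (\ref{BoundsForGZeroDensity}) to force $h_k \le c_1 c_2 \sigma$ on $A_k$, then use the upper bounds to get the pointwise estimate $p_{0,k}\,g_0 \le (c_1 c_2)^2 \sigma$ there, and integrate over a set of $\mu$-measure at most $1$; the paper phrases the last two steps as $2^d c_1 c_2 \int_{[\,c_1^{-1}c_2^{-1}\prod_j t_j \le \sigma\,]} \prod_j t_j\, d\underline t \le 2^d (c_1 c_2)^2 \sigma$, but the estimate is the same. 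Your parenthetical shortcut — that on $A_k$ the integrand is already $\le \sigma$ by the very definition of $A_k$, so $2^d\sigma \le 2^d(c_1 c_2)^2\sigma$ follows at once since $c_1, c_2 \ge 1$ — is also correct and in fact yields a slightly sharper constant than the one stated.
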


\begin{proof}
The first inequality follows easily from Lemma~\ref{BoundsForFZeroF}:  note that 
\begin{eqnarray*}
\int_{[p_0 \le \sigma]} p_0 d\mu 
& = & \sum_{k=1}^{2^d} \int_{[p_k (\underline{t}, F_0) \le \sigma]} p_k (\underline{t} , F_0 ) g_0 (\underline{t}) \underline{dt} \\
& \le & 2^d \int_{[F_0 (\underline{t})g_0 (\underline{t}) \le \sigma]} F_0 (\underline{t} )  g_0 (\underline{t} ) \underline{dt} \\
& \le & 2^d c_1 c_2 \int_{[ c_1^{-1} c_2^{-1}  \prod_{j=1}^d t_j \le \sigma]} \prod_{j=1}^d t_j \, \underline{dt}  \le 2^d (c_1 c_2)^2 \sigma .
\end{eqnarray*}
The second inequality follows from the first inequality of the lemma.
\end{proof}
  
\begin{lem}
\label{MassOfQsigma}
If the hypotheses of Lemmas~\ref{BoundsForFZeroF}  and~\ref{BoundsForSmallValuesDensity}  
hold, then the measure $Q_{\sigma}$
defined by $dQ_{\sigma} \equiv (1/p_0) 1 \{ p_0 > \sigma\}d \mu $ 
has total mass $Q_{\sigma} ({\cal X})$
given by
\begin{eqnarray}
\int d Q_{\sigma} 
& = & \int_{\{ p_0  > \sigma\}} \frac{1}{p_0} d \mu
            \nonumber  \\
& = & \sum_{j=1}^{2^d} \int_{\{\underline{t} : \ p_{0,j} (\underline{t}) g_0(\underline{t}) > \sigma \}} 
               \frac{1}{ p_{0,j} (\underline{t}) g_0(\underline{t}) }  \underline{dt} \nonumber \\
& \le & 2^d \int_{\{\underline{t} \in [0,1]^d: \  \prod_{j=1}^d t_j > \sigma/(c_1 c_2) \} }
               \frac{ c_1c_2}{ \prod_{j=1}^d t_j }  \underline{dt}
                \label{FirstCrackAnalyticExpressionTotalMass} \\
& = & \frac{2^d c_1 c_2}{d!} ( \log (c_1 c_2 / \sigma ))^d  .
                \label{AsympApproxTotalMass}
\end{eqnarray}
\end{lem}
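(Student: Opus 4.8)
The plan is to establish the three displayed relations of the lemma in turn: the identity rewriting $Q_\sigma({\cal X})$ as a sum of $2^d$ integrals over $[0,1]^d$, then the inequality \eqref{FirstCrackAnalyticExpressionTotalMass} bounding that sum by $2^d c_1 c_2$ times one canonical integral, and finally the exact evaluation \eqref{AsympApproxTotalMass} of that integral. To start, I would unwind the definition of $p_0$ on the sample space ${\cal X} = \{0,1\}^{2^d}\times \RR^{+d}$ with dominating measure $(\text{counting on } \{0,1\}^{2^d})\times \mu$: a point is a pair $(\underline{\gamma},\underline{t})$ with exactly one coordinate $\gamma_k$ equal to $1$, and on the slice $\{\gamma_k = 1\}$ one has $p_0 = p_{0,k}(\underline{t})\,g_0(\underline{t})$. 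Integrating $1/p_0$ over $\{p_0 > \sigma\}$ against this dominating measure therefore splits as $\sum_{k=1}^{2^d}\int_{\{p_{0,k}(\underline{t})g_0(\underline{t}) > \sigma\}} (p_{0,k}(\underline{t})g_0(\underline{t}))^{-1}\,\underline{dt}$, which is the second displayed line of the lemma.

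For the inequality I would use the compact form of Lemma~\ref{BoundsForFZeroF} together with \eqref{BoundsForGZeroDensity}. Writing $k = 1 + \sum_{j=1}^d (1-\delta_j)2^{j-1}$ with $\delta_j\in\{0,1\}$, these give $p_{0,k}(\underline{t})g_0(\underline{t}) \ge (c_1 c_2)^{-1}\prod_{j=1}^d t_j^{\delta_j}(1-t_j)^{1-\delta_j}$ and, from the matching upper bounds, $p_{0,k}(\underline{t})g_0(\underline{t}) \le c_1 c_2 \prod_{j=1}^d t_j^{\delta_j}(1-t_j)^{1-\delta_j}$ on $[0,1]^d$. Hence the integrand is at most $c_1 c_2 \prod_{j} t_j^{-\delta_j}(1-t_j)^{-(1-\delta_j)}$ and the domain of integration is contained in $\{\underline{t}\in[0,1]^d:\ \prod_j t_j^{\delta_j}(1-t_j)^{1-\delta_j} > \sigma/(c_1 c_2)\}$; since the integrand is nonnegative this only enlarges the integral. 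The reflection $t_j\mapsto 1-t_j$ performed in exactly the coordinates $j$ with $\delta_j = 0$ is a measure-preserving bijection of $[0,1]^d$ that transforms each of these $2^d$ integrals into the single integral $c_1 c_2 \int_{\{\underline{t}\in[0,1]^d:\ \prod_j t_j > \sigma/(c_1 c_2)\}} (\prod_j t_j)^{-1}\,\underline{dt}$, and summing the $2^d$ identical bounds yields \eqref{FirstCrackAnalyticExpressionTotalMass}.

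Finally, to evaluate that integral I would substitute $u_j = -\log t_j$, so that $t_j = e^{-u_j}$, $\underline{dt} = e^{-\sum_j u_j}\,\underline{du}$, and $t_j\in(0,1]$ corresponds to $u_j\in[0,\infty)$. The weight $(\prod_j t_j)^{-1} = e^{\sum_j u_j}$ cancels the Jacobian, the constraint $\prod_j t_j > \sigma/(c_1 c_2)$ becomes $\sum_j u_j < \log(c_1 c_2/\sigma)$, and the integral reduces to the Lebesgue volume of the simplex $\{\underline{u}\ge 0:\ \sum_j u_j < L\}$ with $L = \log(c_1 c_2/\sigma)$, which equals $L^d/d!$. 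Multiplying by $2^d c_1 c_2$ gives \eqref{AsympApproxTotalMass}. This step tacitly uses $\sigma < c_1 c_2$ so that $L > 0$ and the simplex is nondegenerate, which is the only regime of interest since $\sigma = \sigma(\epsilon)\to 0$ in the applications.

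I do not expect a genuine obstacle: the argument is elementary throughout. The only two points requiring care are identifying $p_0$ correctly on each orthant slice of ${\cal X}$ so that the clean $2^d$-fold sum appears, and carrying out the orthant-reflection change of variables that collapses all $2^d$ integrals to one canonical form, after which the logarithmic substitution reduces the problem to the classical volume-of-a-simplex formula. The factor $2^d$ in the final bound is thus exactly the number of orthants, entering with no loss beyond the crude bounds $g_0\le c_2$ and $p_{0,k}\le c_1\prod_j t_j^{\delta_j}(1-t_j)^{1-\delta_j}$.
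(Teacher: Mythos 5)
Your proof is correct and takes essentially the same approach as the paper: bound via Lemma~\ref{BoundsForFZeroF}, substitute $t_j = e^{-u_j}$, and reduce to the volume of a simplex. You spell out the orthant-reflection $t_j \mapsto 1-t_j$ that collapses the $2^d$ integrals to a single canonical one (which the paper leaves implicit) and quote the simplex-volume formula directly rather than deriving it by induction as the paper does; otherwise the arguments coincide.
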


\begin{proof}
This follows from Lemma~\ref{BoundsForFZeroF}, 
followed by an explicit calculation. 
In particular, the equality in (\ref{AsympApproxTotalMass}) follows from 
\begin{eqnarray*}
\int_{[\prod_{j=1}^d t_j > b]} \frac{1}{\prod_{j=1}^d t_j}  \underline{dt} 
& = & \int_{[ \sum_1^d x_j \le \log (1/b)]}  \underline{dx} \ \  \mbox{by the change of variables} \ t_j = e^{-x_j},\\
& = &\frac{1}{d!} \left ( \log (1/b) \right )^d \ \  \ \ \mbox{for} \ \ 0 < b \le 1
\end{eqnarray*}
where the second equality follows by induction:  it holds easily for $d=1$ (and $d=2$); and then an easy calculation
shows that it holds for $d$ if it holds for $d-1$.
\end{proof}
 
\begin{lem}
\label{CruxEntropyBound}
If the hypotheses of Lemmas~\ref{BoundsForFZeroF} and ~\ref{BoundsForSmallValuesDensity} hold,
and $d\ge2$, then 
\begin{eqnarray*}
\log N_{[\, ]} ( \epsilon , {\cal G}^{(conv)} , L_2 (P_0)) \le K \frac{ \left [ \log (1/\epsilon)\right ]^{5d/2-2}}{\epsilon} 
\end{eqnarray*}
for all $0 < \epsilon < \mbox{some}\  \epsilon_0 $ and some constant $K< \infty$. 
\end{lem}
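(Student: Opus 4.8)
The plan is to run the estimate through Lemma~\ref{VdGBasicLemma} and then through the bracketing entropy bound for multivariate distribution functions of \cite{Gao:12}. By Lemma~\ref{VdGBasicLemma},
\begin{equation*}
\log N_{[\,]}(3\epsilon, \mathcal{G}^{(conv)}, L_2(P_0)) \ \le \ \log N_{[\,]}(\epsilon/2, \mathcal{P}, L_2(Q_{\sigma(\epsilon)})).
\end{equation*}
By Lemma~\ref{BoundsForSmallValuesDensity} the truncation level satisfies $\sigma(\epsilon) \ge \sigma'_\epsilon := \epsilon^2/(2^d(c_1c_2)^2)$, and since $dQ_\sigma$ is nonincreasing in $\sigma$ we may replace $Q_{\sigma(\epsilon)}$ on the right by $Q_{\sigma'_\epsilon}$. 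So it suffices to bound $\log N_{[\,]}(\epsilon/2, \mathcal{P}, L_2(Q_{\sigma'_\epsilon}))$ with $\sigma'_\epsilon$ explicit; by Lemma~\ref{MassOfQsigma} the weighting measure has mass $Q_{\sigma'_\epsilon}(\mathcal{X}) \le C_{d,c_1,c_2}\,(\log(1/\epsilon))^d$ for all $\epsilon$ below some threshold.

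The next, and main, step is to unwind this weighted entropy of $\mathcal{P}$. Identifying a density in $\mathcal{P}$ with the vector $(p_{F,k}(\cdot)\,g_0(\cdot))_{k=1}^{2^d}$ of its components over the $2^d$ atoms of $\{0,1\}^{2^d}$ --- with $p_{F,k}$ the orthant probability in the $d$-dimensional version of (\ref{StructureOfTheCellProbabilities}) --- the measure $Q_{\sigma'_\epsilon}$ puts, on the $k$-th atom, Lebesgue density $(p_{0,k}(\underline{t})g_0(\underline{t}))^{-1}1_{[p_{0,k}g_0 > \sigma'_\epsilon]}$. Reflecting the ``survival'' coordinates of the $k$-th orthant by $t_j\mapsto 1-t_j$ turns each $p_{F,k}$ into an ordinary $d$-dimensional (sub-)distribution function of a reflected $F$ and turns $1/p_{0,k}$ into a function bounded, by Lemmas~\ref{BoundsForFZeroF} and~\ref{BoundsForSmallValuesDensity}, by $c_1(\prod_{j=1}^d t_j)^{-1}$ on the set $\{\prod_{j=1}^d t_j > \sigma'_\epsilon/(c_1c_2)\}$, which contains $\{p_{0,k}g_0 > \sigma'_\epsilon\}$. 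Absorbing the bounded factor $g_0 \le c_2$, splitting the bracket budget among the $2^d$ atoms, and using that these reflections are bijections of the class $\mathcal{F}_d$ of distribution functions on $[0,1]^d$, one obtains
\begin{equation*}
\log N_{[\,]}(\epsilon/2, \mathcal{P}, L_2(Q_{\sigma'_\epsilon})) \ \le \ 2^d\,\log N_{[\,]}(c\,\epsilon,\ \mathcal{F}_d,\ L_2(\nu_\epsilon)),
\end{equation*}
where $c = c(d,c_1,c_2) > 0$ and $\nu_\epsilon$ is the finite measure with Lebesgue density $(\prod_{j=1}^d t_j)^{-1}1_{[\prod_{j=1}^d t_j > \sigma'_\epsilon/(c_1c_2)]}$ on $[0,1]^d$.

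It then remains to invoke the corollary of \cite{Gao:12} proved in the Appendix, which bounds exactly $\log N_{[\,]}(\eta, \mathcal{F}_d, L_2(\nu))$ for weighting measures $\nu$ of this truncated $(\prod_{j=1}^d t_j)^{-1}$ type. Here the truncation level $\sigma'_\epsilon/(c_1c_2) \asymp \epsilon^2$ is of the order of the square of the scale $c\,\epsilon$, and in this regime the corollary gives a bound of order $\epsilon^{-1}(\log(1/\epsilon))^{d/2}\cdot(\log(1/\epsilon))^{2(d-1)}$ --- the factor $(\log(1/\epsilon))^{d/2}$ coming from the mass $\nu_\epsilon([0,1]^d)\asymp(\log(1/\epsilon))^d$ of the weighting measure (Lemma~\ref{MassOfQsigma}), and the factor $(\log(1/\epsilon))^{2(d-1)}$ being the bracketing-entropy exponent of \cite{Gao:12} for $d$-dimensional distribution functions. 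Combining the three displays,
\begin{equation*}
\log N_{[\,]}(3\epsilon, \mathcal{G}^{(conv)}, L_2(P_0)) \ \lesssim \ \frac{(\log(1/\epsilon))^{d/2 + 2(d-1)}}{\epsilon} \ = \ \frac{(\log(1/\epsilon))^{5d/2-2}}{\epsilon},
\end{equation*}
and replacing $3\epsilon$ by $\epsilon$ (which merely inflates the constant and shrinks $\epsilon_0$) yields the claim.

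The part I expect to require real care is this unwinding step: one must arrange the coordinate reflections and the atom-wise bracket-splitting so that the weighted entropy of $\mathcal{P}$ with respect to $Q_{\sigma'_\epsilon}$ genuinely reduces to the truncated-weight entropy of $\mathcal{F}_d$ handled in the Appendix, and one must check that the singularity of the weight $1/p_0$ at the boundary of $[0,1]^d$ is precisely cancelled by the truncation $1_{[p_0 > \sigma'_\epsilon]}$ --- which is exactly the purpose of the van~de~Geer device in Lemma~\ref{VdGBasicLemma}. Everything else --- the choice of $\sigma(\epsilon)$, evaluation of $\nu_\epsilon([0,1]^d)$ via Lemma~\ref{MassOfQsigma}, and the accounting of the powers of $\log(1/\epsilon)$ --- is routine once the entropy of $\mathcal{F}_d$ itself has been bounded in the Appendix.
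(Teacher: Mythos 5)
Your proposal is correct and follows essentially the same route as the paper: Lemma~\ref{VdGBasicLemma} to pass to a weighted bracketing entropy of $\mathcal{P}$, the mass bound of Lemma~\ref{MassOfQsigma}, and Corollary~\ref{CorollariesOfGaosThm}(b) of Gao's theorem for the final entropy bound. The one thing you do that the paper leaves implicit is to spell out the ``unwinding'' step --- decomposing over the $2^d$ atoms, reflecting coordinates so each orthant probability $p_{0,k}$ becomes a distribution function, absorbing the bounded factor $g_0$, and splitting the bracket budget --- which is precisely what justifies the paper's terse ``by Corollary~\ref{CorollariesOfGaosThm}(b)'' when that corollary is stated for $\mathcal{F}_d$ under $L_2(R_{d,\sigma})$ rather than for $\mathcal{P}$ under $L_2(\tilde{Q}_{\sigma(\epsilon)})$; your version is therefore a welcome elaboration rather than a divergence.
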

  
\begin{proof}
This follows by combining the results of Lemmas~\ref{BoundsForSmallValuesDensity} and 
\ref{MassOfQsigma}  with Lemma~\ref{VdGBasicLemma}, and 
then using Corollary~\ref{CorollariesOfGaosThm}  
of the bracketing entropy bound of \cite{Gao:12} and stated here as Theorem~\ref{GaosThm}.  
Here is the explicit calculation:
\begin{eqnarray*}
\lefteqn{\log N_{[\, ]} ( 6 \epsilon , {\cal G}^{(conv)} , L_2 (P) )} \\
& \le & \log N_{[\, ]} \left ( \frac{\epsilon}{\sqrt{Q_{\sigma(\epsilon)} ({\cal X} )}}, 
              {\cal P}, L_2 (\tilde{Q}_{\sigma(\epsilon)} ) \right )  \qquad \mbox{by Lemma~\ref{VdGBasicLemma}}   \\  
& \le & \log N_{[\, ]} \left ( \frac{\epsilon}{\sqrt{\frac{2^d c_1c_2}{d!}  \left [ \log ((c_1c_2)^3\cdot 2^d /(\epsilon^2 ) \right ]^d }}, {\cal P} , 
                L_2 (\tilde{Q}_{\sigma(\epsilon)} ) \right ) 
                \qquad \mbox{by \ Lemmas~\ref{BoundsForSmallValuesDensity}~and~\ref{MassOfQsigma}} \\
& \le & \log N_{[\, ]} \left ( \frac{V \epsilon}{\left [ \log (1/\epsilon ) \right ]^{d/2}  }, {\cal P} , 
                L_2 (\tilde{Q}_{\sigma(\epsilon)} ) \right ) \qquad \mbox{for} \ \ V = V_d (c_1,c_2) \\
& \le & K \frac{\left [ \log(1/\epsilon) \right ]^{d/2}}{V \epsilon} 
               \left [ \log \left ( \frac{ (\log (1/\epsilon))^{d/2}}{V\epsilon} \right ) \right ]^{2(d-1)}  
                  \qquad \mbox{by Corollary~\ref{CorollariesOfGaosThm}(b)} \\
& \le & \tilde{K} \frac{\left [\log (1/\epsilon) \right ]^{5d/2 -2}}{\epsilon} 
\end{eqnarray*}
for $\epsilon$ sufficiently small. 
\end{proof} 
 
\begin{proof} (Theorem~\ref{NewMultHellingerRateThm})
This follows from Lemma~\ref{CruxEntropyBound} and Theorem 7.6 of 
\cite{MR1739079} or Theorem 3.4.1 of \cite{MR1385671} together with the 
arguments given in Section 3.4.2.  By Lemma~\ref{CruxEntropyBound} the bracketing entropy integrals
\begin{eqnarray*} 
J_{[\,]} (\delta , {\cal G}^{(conv)} , L_2 (P_0)) \equiv \int_0^{\delta}
\sqrt { 1 + \log N_{[\, ]} (\epsilon , {\cal G}^{(conv)} , L_2 (P_0)) } \ d\epsilon 
\lesssim \int_0^\delta \epsilon^{-1/2} \left \{ \log (1/\epsilon) \right \}^{3\gamma_d/2} \ d \epsilon
\end{eqnarray*}
where the bound on the right side 
behaves asymptotically as a constant times $2 \delta^{1/2} ( \log (1/\delta ))^{3\gamma_d/2}$ 
with $3\gamma_d \equiv  5d/2 -2$,
and hence (using the notation of  Theorem 3.4.1 of \cite{MR1385671}), we can take 
$\phi_n (\delta ) = K 2 \delta^{1/2} ( \log (1/\delta ))^{3\gamma_d/2}$.     Thus with 
$r_n \equiv n^{1/3} / (\log n)^\beta $ with $\beta = \gamma_d$ we find that
$r_n^2 \phi_n (1/r_n ) \sim \tilde{K} \sqrt{n} $ and hence the claimed 
order of convergence holds.
\end{proof}

\section{Some related models and further problems}
\label{sec:OtherModelsFurtherProblems}

There are several related models in which we expect to see the same basic 
phenomenon as established here, namely a global convergence rate of the form
$n^{-1/3} (\log n)^{\gamma}$ in all dimensions $d\ge 2$ with only the power
$\gamma$ of the log term depending on $d$.  Three such models are:\\
(a) the ``in-out model'' for interval censoring in $\RR^d$;\\
(b) the ``case 2'' multivariate interval censoring models studied by \cite{MR2489672}; and\\
(c) the scale mixture of uniforms model for decreasing densities in $\RR^{+d}$.\\
Here we briefly sketch why we expect the same phenomenon to hold 
in these three cases, even though we do not yet know pointwise convergence rates in any of these cases.

\subsection{The ``in-out model'' for interval censoring in $\RR^d$}
\label{subsec:InOut}

The ``in-out model'' for interval censoring in $\RR^d$ was explored in the case $d=2$ by 
\cite{Song/PHD}.  In this model $\underline{Y} \sim F$ on $\RR^2$, $R$ is a random rectangle 
in $\RR^2$ independent of $\underline{Y}$ 
(say $[\underline{U},\underline{V}] = \{ \underline{x} = (x_1, x_2) \in \RR^2 : \ U_1 \le x_1 \le V_1,  \ U_2 \le x_2 \le V_2 \}$
where $\underline{U}$ and $\underline{V}$ are random vectors in $\RR^2$ 
with $\underline{U} \le \underline{V}$ coordinatewise).  
We observe only $ (1_R (\underline{Y}), R)$, and the goal is to estimate the unknown distribution function
$F$.  

\cite{Song/PHD}  (page 86) produced a local asymptotic minimax lower bound for estimation of 
$F$ at a fixed $\underline{t}_0 \in \RR^2$.  Under the assumption that $F$ has a positive density $f$
at $\underline{t}_0$,  \cite{Song/PHD} showed that any estimator of $F(\underline{t}_0)$ can have a local-minimax
convergence rate which is at best $n^{-1/3}$.   \cite{Groeneboom:12} has shown that this rate can
be achieved by estimators involving smoothing methods.   Based on the results 
for current status data in $\RR^d$ obtained in Theorem~\ref{NewMultHellingerRateThm} 
and the entropy results for the class of distribution functions on $\RR^d$,
we conjecture that the global Hellinger rate of convergence of the MLE 
$\hat{F}_n (\underline{t}_0)$ will be  $n^{-1/3} (\log n)^{\nu}$ for all $d \ge 2$ where $\nu = \nu_d$.

\subsection{``Case 2'' multivariate interval censoring models in $\RR^d$}
\label{subsec:MultVarCase2}

Recall that ``case 2'' interval censored data on $\RR$ is as follows:
suppose that $\underline{Y} \sim F_0$ on $\RR^+$, the pair of observation times
$(U,V)$ with $U \le V$  determines a random interval $(U,V]$, and we observe 
$\underline{X} = ( \underline{\Delta}, U, V) = (\Delta_1, \Delta_2, \Delta_3, U,V)$
where $\Delta_1 = 1\{ Y \le U \}$,  $\Delta_2 = 1\{U<Y \le V\}$, and $\Delta_3 = 1\{ V < Y \}$.
Nonparametric estimation of $F_0$ based on $\underline{X}_1, \ldots , \underline{X}_n )$ 
i.i.d. as $\underline{X}$ has been discussed by a number of authors, including 
\cite{MR1180321},  \cite{MR1714713}, and \cite{MR1600884}.
\cite{MR2489672}  studied generalizations of this model to $\RR^d$, and obtained 
rates of convergence of the MLE with respect to the Hellinger metric given by 
$n^{-(1+d)/(2(1+2d)} (\log n)^{d^2/(2(2d+1)}$ in the case most comparable to 
the multivariate interval censoring model studied here.    While this rate reduces when $d=1$ to the 
known rate $n^{-1/3} (\log n)^{1/6}$, it is slower than $n^{-1/3} (\log n)^{\nu}$ for some $\nu$
when $d>1$ due to the use of entropy bounds involving convex hulls (see \cite{MR2489672}, 
Proposition A.1, page 66) which are not necessarily sharp.  We expect that rates 
of the form $n^{-1/3} (\log n)^{\nu}$ with $\nu>0$ are possible in these models as well.

\subsection{Scale mixtures of uniform densities on $\RR^{+d}$}
\label{subsec:ScaleMixUniform}

\cite{MR2717518} 
and  \cite{MR2890434}    
studied the family of scale mixtures of uniform densities of the following form:
\begin{eqnarray}
f_G (\underline{x}) = \int_{\RR^{+d}} \frac{1}{\prod_{j=1}^d y_j }  1_{(0,\underline{y}]} (\underline{x}) dG(\underline{y})\equiv 
\int_{\RR^{+d}} \frac{1}{|\underline{y}|}  1_{(0,\underline{y}]} (\underline{x}) dG(\underline{y})
\label{ScaleMixtureOfUniforms}
\end{eqnarray}
for some distribution function $G$ on $(0,\infty)^{d}$.   
(Note that we have used the notation  $\prod_{j=1}^d y_j = | \underline{y} | $ for 
$\underline{y} = (y_1, \ldots , y_d) \in \RR^{+d}$.)
It is not difficult to see 
that such densities are decreasing in each coordinate and that they also satisfy
\begin{eqnarray*}
(\Delta_d f_G) (\underline{u},\underline{v}] = 
(-1)^d \int_{(\underline{u},\underline{v}]} |\underline{y}|^{-1} 1_{(\underline{y},\underline{v}]} d G(\underline{y}) \ge 0
\end{eqnarray*}
for all $\underline{u},\underline{v} \in \RR^{+d} $ with $\underline{u} \le \underline{v}$; here 
$\Delta_d$ denotes the $d-$dimensional
difference operator.    This is the same key property of 
distribution functions which results in (bracketing) entropies which depend on 
dimension only through a logarithmic term.   The difference here is that 
the density functions $f_G$ need not be bounded, and even if the true density 
$f_0 $ is in this class and satisfies $f_0 (\underline{0}) < \infty$, then we do not yet 
know the behavior of the MLE $\hat{f}_n $ at zero.  In fact we conjecture that:
(a)  If $f_0 (\underline{0}) < \infty$ and $f_0$ is a scale mixture of uniform densities on rectangles as in 
(\ref{ScaleMixtureOfUniforms}), then $\hat{f}_n (\underline{0}) = O_p ((\log n)^{\beta})$ for some 
$\beta = \beta_d>0$. (b) Under the same hypothesis as in (a) and the hypothesis that 
$f_0$ has support contained in a compact set, 
the MLE converges with respect to the Hellinger distance with a rate that is no worse than
$n^{-1/3} (\log n)^{\xi}$ where $\xi= \xi_d$.   Again \cite{MR2717518} and \cite{MR2890434} 
establish asymptotic minimax lower bounds for estimation of $f_0 (\underline{x}_0)$ proving that
no estimator can have a (local minimax) rate of convergence faster than $n^{-1/3}$ in all dimensions.
This is in sharp contrast to the class of block-decreasing densities on $\RR^{+d}$ studied
by \cite{MR2911847} and by  \cite{MR1994726}:    \cite{MR2911847} shows that the local 
asymptotic minimax rate for estimation of $f_0 (x_0)$ is no faster than $n^{-1/(d+2)}$, 
while \cite{MR1994726} show that there exist (histogram type) estimators $\tilde{f}_n $ 
which satisfy $E_{f_0} \| \tilde{f}_n - f_0 \|_1 = O (n^{-1/(d+2)})$.

\section{Appendix}
\label{sec:Appendix}

We begin by summarizing the results of \cite{Gao:12}.    
For a (probability) measure $\mu$ on $[0,1]^d$, let $F \equiv F_{\mu}$ denote the 
corresponding distribution function given by 
$$
F(\underline{x}) = F_{\mu} (\underline{x}) = \mu ( [0,\underline{x}] ) = \mu ( [0,x_1]\times \cdots \times [0,x_d] )
$$
for all $\underline{x} = (x_1, \ldots , x_d) \in [0,1]^d$.  
Let ${\cal F}_d$ denote the collection of all distribution functions on $[0,1]^d$; i.e. 
\begin{eqnarray*}
{\cal F}_d = \{ F : \ F \ \ \mbox{is a distribution function on} \ \ [0,1]^d \} .
\end{eqnarray*}
For example, if $\lambda_d$ denotes Lebesgue measure on $[0,1]^d$, then the corresponding
distribution function is
$F(\underline{x}) = F_{\lambda_d} (\underline{x}) = \prod_{j=1}^d x_j$.
\smallskip
 
\begin{thm} (Gao, 2012).
\label{GaosThm}
For $d \ge 2$ and $1 \le p < \infty$ 
\begin{eqnarray*}
\log N_{[\,]} (\epsilon , {\cal F}_d , L_p (\lambda_d) ) \lesssim \epsilon^{-1} \left ( \log (1/\epsilon) \right )^{2(d-1)} 
\end{eqnarray*}
for all $0 < \epsilon \le 1$.
\end{thm}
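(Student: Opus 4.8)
The plan is to induct on the dimension $d$. The base case $d=1$ is the classical fact that a uniformly bounded monotone function on $[0,1]$ satisfies $\log N_{[\,]}(\epsilon,\mathcal F_1,L_p(\lambda_1))\lesssim \epsilon^{-1}$, which carries no logarithmic factor. For the inductive step I would first record the following decomposition. If $F=F_\mu$ and $\nu$ denotes the last marginal of $\mu$ on $[0,1]$, then for a dyadic interval $J=(a,b]\subset[0,1]$ the increment $h_J(\underline y):=F(\underline y,b)-F(\underline y,a)$ equals $\mu([0,\underline y]\times J)$, so $h_J=m_J\,\widetilde F_J$ with $m_J:=\nu(J)$ and $\widetilde F_J\in\mathcal F_{d-1}$; moreover $\sum_{|J|=2^{-k}}m_J\le 1$ for every scale $k$, since the level-$k$ dyadic intervals tile $(0,1]$. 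Telescoping along the binary expansion of the last coordinate gives $F(\underline y,t^-)=F(\underline y,0)+\sum_{k\ge1}\mathbf 1[\,k\text{-th binary digit of }t=1\,]\,h_{J_k(t)}(\underline y)$, where $J_k(t)$ is the left half of the level-$(k-1)$ dyadic interval containing $t$.

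Given $\epsilon$, I would truncate this sum at level $I\asymp\log(1/\epsilon)$. The tail is nonnegative and, at $(\underline y,t)$, is at most $\nu(\text{level-}I\text{ interval of }t)$; integrating the $p$-th power over $t$ gives $\int_0^1\nu(\text{level-}I\text{ interval of }t)^p\,dt=2^{-I}\sum_{|J|=2^{-I}}m_J^p\le 2^{-I}$, so the tail contributes $\lesssim 2^{-I/p}\le\epsilon$ in $L_p(\lambda_d)$ no matter how adversarial $\nu$ is. This is why a fixed (non-adaptive) dyadic grid on the last coordinate suffices, and — since the estimate is uniform in $p$ — why the final bound will be the same for every $p\in[1,\infty)$. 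It then remains to bracket, for each scale $k\le I$, the finite family $\{h_J : |J|=2^{-k},\ J\text{ a left half}\}$ of at most $2^{k-1}$ rescaled elements of $\mathcal F_{d-1}$ whose masses sum to $\le 1$, together with the single base term $F(\cdot,0)$, which is itself a sub-distribution function on $[0,1]^{d-1}$ and is bracketed at precision $\epsilon$ at cost $\log N_{[\,]}(\epsilon,\mathcal F_{d-1},L_p(\lambda_{d-1}))$.

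The heart of the argument is the allocation of precision across a scale. Bracketing $h_J=m_J\widetilde F_J$ in $L_p(\lambda_{d-1})$ at precision $\eta_J$ costs $\log N_{[\,]}(\eta_J/m_J,\mathcal F_{d-1},L_p(\lambda_{d-1}))\lesssim (m_J/\eta_J)\,(\log(1/\epsilon))^{2(d-2)}$ by the inductive hypothesis, and the scale-$k$ contribution to the bracket width is controlled once $\sum_J\eta_J^p\lesssim 2^{k}(\epsilon/I)^p$. A Hölder/Lagrange computation (the worst case being all masses equal) shows that the minimal value of $\sum_J m_J/\eta_J$ subject to that constraint is $\asymp I/\epsilon$, with the exponents in $p$ cancelling; hence the cost at scale $k$ is $\lesssim (I/\epsilon)(\log(1/\epsilon))^{2(d-2)}$, and summing over the $I\asymp\log(1/\epsilon)$ scales yields $\lesssim\epsilon^{-1}(\log(1/\epsilon))^{2(d-1)}$, as claimed. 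To make the scheme independent of $F$ one discretizes the masses $m_J$ to dyadic levels and records them; increments with $m_J$ below a threshold $\theta\asymp\epsilon/\log(1/\epsilon)$ receive the trivial bracket $[0,\theta\mathbf 1]$, whose widths accumulate to $\lesssim\epsilon$ along the $\le I$ active scales seen by a fixed $t$, and one checks that the number of ``large'' increments, together with the cost of recording their approximate masses, stays within the stated budget.

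The main obstacle is precisely this simultaneous fine-tuning. One must choose the truncation level $I$, the per-scale precisions $\eta_J$, and the granularity of the mass-discretization so that, all at once: (i) the total $L_p(\lambda_d)$ bracket width is $\lesssim\epsilon$ for every $p\in[1,\infty)$; (ii) the combinatorial cost of recording which increments are large and what their masses are does not exceed $\epsilon^{-1}(\log(1/\epsilon))^{2(d-1)}$; and (iii) the recursion closes with the exponent $2(d-1)$ exactly rather than $2(d-1)+1$. It is this bookkeeping, not any single inequality, that makes the argument delicate: a naive allocation (e.g. bracketing every slice to a common precision) loses a factor of $\epsilon^{-1}$, while a slightly careless one loses a further power of $\log(1/\epsilon)$.
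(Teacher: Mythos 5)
This statement is not proved in the paper at all: Theorem~\ref{GaosThm} is quoted verbatim from \cite{Gao:12} (``Theorem A.1 (Gao, 2012)'') precisely so that Corollary~\ref{CorollariesOfGaosThm} can be deduced from it, and the appendix gives a proof only of that corollary. So there is no in-paper argument to compare your proposal against; you are supplying a proof for an external cited result.

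On its own terms, the plan --- induction on $d$, dyadic slicing of the last coordinate, and a mass-weighted allocation of precisions across each scale --- is a natural attack, and the headline arithmetic (per-scale cost $\asymp (I/\epsilon)(\log(1/\epsilon))^{2(d-2)}$ via H\"older/Lagrange, summed over $I\asymp\log(1/\epsilon)$ scales) does land on $\epsilon^{-1}(\log(1/\epsilon))^{2(d-1)}$. But the bookkeeping you flag as ``delicate'' is, as written, genuinely open. Two concrete gaps. First, the tail estimate $\lesssim 2^{-I/p}$ forces $I\gtrsim p\log_2(1/\epsilon)$, hence roughly $2^{I}\gtrsim \epsilon^{-p}$ dyadic slices up to the truncation level; for $p>1$ the sheer number of slices to be indexed or assigned a (even one-bit) mass label already blows past $\epsilon^{-1}(\log(1/\epsilon))^{2(d-1)}$, so the claim that $I\asymp\log(1/\epsilon)$ suffices ``no matter how adversarial $\nu$ is'' and that the estimate is ``uniform in $p$'' needs an actual argument and is, as stated, not justified. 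Second, even for $p=1$ and $d=2$ the budget is tight: with threshold $\theta\asymp\epsilon/\log(1/\epsilon)$ there can be up to $I/\theta\asymp (\log(1/\epsilon))^{2}/\epsilon$ ``large'' slices across all scales, and recording an $O(\log\log(1/\epsilon))$-bit approximate mass for each already costs $\epsilon^{-1}(\log(1/\epsilon))^{2}\log\log(1/\epsilon)$, overshooting the $d=2$ target $\epsilon^{-1}(\log(1/\epsilon))^{2}$ by a $\log\log$ factor; choosing which of the $\approx 2^{I}$ slices are large can cost more still. You should also make explicit that the inductive hypothesis really yields a factor $(\log(m_J/\eta_J))^{2(d-2)}$, not $(\log(1/\epsilon))^{2(d-2)}$, and verify $m_J/\eta_J\lesssim \epsilon^{-1}\,\mathrm{poly}\log(1/\epsilon)$ on the slices you bracket nontrivially so the two agree. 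In short, the skeleton is reasonable, but the combinatorial overhead is not shown to fit the stated budget, and the dependence on $p$ is a real (not merely bookkeeping) obstacle that the proposal does not address.
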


Our goal here is to use this result to control bracketing numbers for ${\cal F}_d$ with respect to 
two other measures $C_d$ and $R_{d,\sigma}$ defined as follows.  Let $C_d$ denote the finite 
measure on $[0,1]^d$ with density with respect to $\lambda_d$ given by
\begin{eqnarray*}
c_d (\underline{u}) = \frac{d!}{d^d}  \prod_{j=1}^d \frac{1}{u_j^{1-1/d}} \cdot 1 \left \{ \sum_{j=1}^d u_j^{1/d} > d-1 \right \} .
\end{eqnarray*}
For fixed $\sigma > 0$, let $R_{d,\sigma}$ denote the (probability) measure on $(0,1]^d$ with 
density with respect to $\lambda_d$ given by 
\begin{eqnarray*}
r_{d,\sigma} (\underline{t}) = \frac{d!}{(\log(1/\sigma))^d} \frac{1}{\prod_{j=1}^d t_j }  1\left \{\prod_{j=1}^d t_j > \sigma \right \} .
\end{eqnarray*}
 
\begin{cor}
\label{CorollariesOfGaosThm}
(a) \ For each $d \ge 2$ it follows that for $\epsilon \le \epsilon_0 (d)$
\begin{eqnarray*} 
\log N_{[\, ]} (2^{d/2}\epsilon , {\cal F}_d , L_2 (C_d) ) 
\lesssim \epsilon^{-1} \left ( \log (1/\epsilon) \right )^{2(d-1)} .
\end{eqnarray*}
(b) \ For each $d \ge 2$ and $\sigma \le \sigma_0 (d)$ it follows that for $\epsilon \le \epsilon_0 (d)/2$
\begin{eqnarray*} 
\log N_{[\, ]} (2^{d/2 +1}\epsilon , {\cal F}_d , L_2 (R_{d,\sigma}) ) 
\lesssim \epsilon^{-1} \left ( \log (1/\epsilon) \right )^{2(d-1)} .
\end{eqnarray*}
\end{cor}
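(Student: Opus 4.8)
The plan is to reduce the bracketing numbers with respect to $C_d$ and $R_{d,\sigma}$ to the bracketing numbers with respect to Lebesgue measure $\lambda_d$ furnished by Theorem~\ref{GaosThm}, by exhibiting an explicit change-of-variables (a coordinatewise monotone reparametrization) that maps $\lambda_d$ onto (a scalar multiple of) each of the two target measures, and then tracking how an $\epsilon$-bracket transforms under that map. For part (a), I would look for a product map $\Phi(\underline u) = (\phi(u_1),\dots,\phi(u_d))$ with $\phi:[0,1]\to[0,1]$ increasing, such that the pushforward of $\lambda_d$ under $\Phi$ has density proportional to $c_d$; the form of $c_d$, namely $\prod_j u_j^{-(1-1/d)}$ times the indicator of $\{\sum_j u_j^{1/d} > d-1\}$, strongly suggests $\phi(s) = s^d$ (so that $u_j = s_j^d$, $du_j = d s_j^{d-1} ds_j$, and $u_j^{1/d} = s_j$), which turns the region $\{\sum_j u_j^{1/d}>d-1\}$ into $\{\sum_j s_j > d-1\}$, i.e. a corner simplex at $(1,\dots,1)$. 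Because $F \mapsto F\circ\Phi$ is a bijection of ${\cal F}_d$ onto itself (composition with a coordinatewise increasing map preserves the $\Delta_d \ge 0$ property), an $L_2(\lambda_d)$ $\epsilon$-bracket $[F_L,F_U]$ pulls back to an $L_2(C_d')$ bracket $[F_L\circ\Phi, F_U\circ\Phi]$ of the same ${\cal F}_d$, where $C_d'$ is the image measure; the factor $2^{d/2}$ in the statement is exactly the bookkeeping constant coming from the normalization $d!/d^d$ and from comparing $\|g\|_{L_2(C_d)}$ with $\|g\circ\Phi^{-1}\|_{L_2(\lambda_d)}$ on the simplex (note $|\{\sum s_j > d-1\}\cap[0,1]^d| = 1/d!$, matching the normalizing constant). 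So part (a) is: write down $\Phi$, verify it pushes $\lambda_d$ to the right multiple of $C_d$, verify $F\mapsto F\circ\Phi$ is a self-bijection of ${\cal F}_d$, and conclude.

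For part (b), the measure $R_{d,\sigma}$ has density proportional to $\prod_j t_j^{-1}$ on $\{\prod_j t_j > \sigma\}$, which under $t_j = e^{-x_j}$ becomes (as in the computation in Lemma~\ref{MassOfQsigma}) Lebesgue measure on the simplex $\{\sum_j x_j < \log(1/\sigma)\}$, i.e. $R_{d,\sigma}$ is, up to the normalizing constant $d!/(\log(1/\sigma))^d$, the pushforward of $\lambda_d$ under a rescaled log-coordinate change. The cleanest route is to pass from $R_{d,\sigma}$ back to a measure of $C_d$-type by a further power change of variables, so that (b) follows from (a) with an extra factor $2^{1/2}$ (hence $2^{d/2+1}$ overall); alternatively one argues directly. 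Either way the essential point is identical to (a): monotone coordinatewise reparametrizations act as bijections on ${\cal F}_d$ and carry brackets to brackets, only the diameters get rescaled by a measure-dependent constant. The role of the restrictions $\sigma \le \sigma_0(d)$ and $\epsilon \le \epsilon_0(d)$ is just to absorb lower-order terms (and to ensure $\log(1/\epsilon)>0$, etc.) when converting the bound from the form $\epsilon^{-1}(\log(1/\epsilon))^{2(d-1)}$ on the rescaled argument back to the same form in the original $\epsilon$; the key observation there is that rescaling $\epsilon$ by a constant, or even by a polylogarithmic factor in $\sigma$, changes $\epsilon^{-1}(\log(1/\epsilon))^{2(d-1)}$ only by a constant once $\epsilon$ is small.

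The main obstacle I anticipate is not conceptual but bookkeeping: getting the constants in front of $\epsilon$ exactly right — verifying that the pushforward densities equal $c_d$ and $r_{d,\sigma}$ on the nose (including the combinatorial constants $d!/d^d$ and $d!/(\log(1/\sigma))^d$ and the precise form of the restricting indicators), and then propagating these through the $L_2$ norm comparison to land exactly at $2^{d/2}\epsilon$ and $2^{d/2+1}\epsilon$. In particular one must check that the Jacobian factors ($\prod_j d s_j^{d-1}$ and the like), which are themselves unbounded or vanishing at the boundary of the cube, are precisely cancelled by the stated densities, so that after the change of variables one is genuinely integrating against honest Lebesgue measure on a simplex and Theorem~\ref{GaosThm} applies verbatim (Theorem~\ref{GaosThm} is stated for $[0,1]^d$, and the simplex sits inside $[0,1]^d$, so the entropy bound there only decreases under restriction). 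A secondary point to be careful about is that the bracket endpoints $F_L\circ\Phi, F_U\circ\Phi$ need not themselves be distribution functions, but that is harmless — brackets in bracketing-entropy arguments are only required to sandwich the class, not to belong to it — and this is exactly how Theorem~\ref{GaosThm} is used throughout.
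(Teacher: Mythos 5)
Your approach to part (a) is correct, but it is genuinely different from what the paper does. The paper does not use a change of variables for (a): it takes $\epsilon$-brackets for ${\cal F}_d$ in $L_p(\lambda_d)$ with $p = 2(2d-1)$ (not $p=2$), and then applies H\"older's inequality with conjugate exponents $r = 2d-1$ and $s = (d-1/2)/(d-1)$ to bound $\int (h_j-g_j)^2 c_d \, d\lambda_d \le \bigl(\int|h_j-g_j|^{2r}\bigr)^{1/r}\bigl(\int c_d^s\bigr)^{1/s}$. The exponent $s>1$ is chosen precisely so that $c_d^s$ is integrable despite $c_d$ blowing up at the boundary, and the numerical bound $\int c_d^s \le 2^d$ is where the constant $2^{d/2}$ comes from. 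Your reparametrization $\Phi(s)=(s_1^d,\dots,s_d^d)$ is arguably cleaner: the Jacobian and the density cancel exactly, $c_d(\Phi(s))\,\underline{du} = d!\,1\{\sum s_j > d-1\}\,\underline{ds}$, the class ${\cal F}_d$ is preserved by composition with a coordinatewise increasing bijection (your ``self-bijection'' observation is the right one and is needed), and you only need Gao's theorem with $p=2$. So the same brackets composed with $\Phi^{-1}$ work, with bracket size $\le \sqrt{d!}\,\epsilon$. This buys conceptual transparency, and avoids the slightly exotic conjugate-exponent bookkeeping. What it costs is the exact constant: your parenthetical claim that $2^{d/2}$ ``is exactly the bookkeeping constant coming from the normalization $d!/d^d$'' is not right --- your route produces $\sqrt{d!}$, which grows faster than $2^{d/2}$; the $2^{d/2}$ is an artifact of the H\"older estimate in the paper. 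This is immaterial for the corollary as stated, since only $\lesssim$ is claimed and an $\epsilon_0(d)$ is allowed; but if you write it up you should not claim to reproduce that particular constant.

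For part (b), you are closer to the paper: the paper also goes by a coordinatewise change of variables $u_j(t_j) = \bigl(\log(t_j/\sigma)/\log(1/\sigma)\bigr)^d$ applied to the same brackets $[g_j,h_j]$, under which $r_{d,\sigma}(\underline{t})\,\underline{dt} = c_d(\underline{u})\,\underline{du}$ exactly, so that (b) reduces to (a) with no loss. Your sketch via $t_j = e^{-x_j}$ landing on Lebesgue measure on the simplex $\{\sum x_j < \log(1/\sigma)\}$ is the same idea with an extra intermediate stop; it works, but note that you then need a further coordinatewise rescaling to get back onto $[0,1]^d$ so that Gao's theorem applies (Theorem~\ref{GaosThm} is stated for $[0,1]^d$, not for an unbounded domain or a $\sigma$-dependent cube), and you should check, as the paper implicitly does, that the support $\{\prod t_j > \sigma\}\cap[0,1]^d$ is contained in $[\sigma,1]^d$ so that the reparametrization is a bijection onto the relevant region. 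Your remarks that bracket endpoints need not lie in ${\cal F}_d$ and that constant or polylogarithmic rescalings of $\epsilon$ are absorbed by the $\lesssim$ and the $\epsilon_0(d)$ are both correct and are exactly the right things to keep in mind.
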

 
\begin{proof}
We first prove (a).  
 We set $p\equiv p_d = 2 r_d \equiv 2r$ where $r \equiv r_d = 2d-1$ 
and $s =(d-1/2)/(d-1)$ satisfy $r^{-1} + s^{-1} =1$.  
Let $\{ [g_j, h_j ], j = 1, \ldots , m\}$ be a collection of $\epsilon-$brackets for ${\cal F}_d$
with respect to $L_p (\lambda_d)$.  (Thus for $d=2$, $r = 3$, $s=3/2$, and $p=6$, while for 
$d=4$, $r=7$, $s= (13/2)/3 = 13/6$, and $p= 14$.)  
By Theorem A.1 we know that $m \lesssim \epsilon^{-1} ( \log (1/\epsilon) )^{2(d-1)} $.
Now we bound the size of the brackets 
$[g_j, h_j]$ with respect to $C_d$.  
Using  H\"older's inequality with $1/r + 1/s =1$ as chosen above we find that 
\begin{eqnarray}
\int_{[0,1]^d} ( h_j - g_j )^2 c_d (u) du
& \le & \left ( \int_{[0,1]^d} | h_j - g_j |^{2r}  \underline{du} \right )^{1/r} 
                \cdot \left ( \int_{[0,1]^d} c_d (\underline{u})^s \underline{du} \right )^{1/s} \nonumber \\
&\le & (\epsilon^p )^{1/r} \cdot 2^{d/s} \le 2^{d} \epsilon^2 .
\label{L2EntropyWrtCd}
\end{eqnarray} 

Here are some details of the computation leading to (\ref{L2EntropyWrtCd}):
\begin{eqnarray*}
\int_{[0,1]^d} c_d (\underline{u})^s \underline{du}
& = & \int_{[0,1]^d} \left ( \frac{d!}{d^d} \right )^s \prod_{j=1}^d \frac{1}{u_j^{(d-1/2)/d}} 
           \cdot 1 \left \{ \sum_{j=1}^d u_j^{1/d} > d-1 \right \} \underline{du} \\
& = & \left ( \frac{d!}{d^d} \right )^s \cdot (2d)^d \int_{[0,1]^d} 1 \left \{ \sum_{j=1}^d x_j^{2} > d-1 \right \} \underline{dx}\\
& \le & \left ( \frac{d!}{d^d} \right )^s \cdot (2d)^d \cdot \int_{[0,1]^d}1 \left \{ \sum_{j=1}^d x_j > d-1 \right \} \underline{dx}\\
& \le & \left ( \frac{d!}{d^d} \right )^s \cdot (2d)^d \cdot \int_{[0,1]^d}1 \left \{ \sum_{j=1}^d t_j < 1 \right \} \underline{dt}\\
&=&2^d\left ( \frac{d!}{d^d} \right )^{s-1}
\le 2^d .
\end{eqnarray*}

To prove (b) we introduce monotone transformations $t_j (u_j)$ and their inverses $u_j (t_j)$ which 
relate $c_d$ and $r_{d,\sigma}$:  we set 
\begin{eqnarray*}
&& u_j (t_j) \equiv \left ( \frac{ \log (t_j / \sigma )}{\log (1/\sigma)} \right )^d, \\
&& t_j (u_j) \equiv \sigma \exp ( u_j^{1/d} \log (1/\sigma) )
\end{eqnarray*}
for $j = 1, \ldots , m$.  These all depend on $\sigma>0$, but this dependence is suppressed in 
the notation.    

For the same brackets $[g_j , h_j]$ used in the proof of (a), we define new brackets 
$[ \tilde{g}_j , \tilde{h}_j ]$ for $j = 1, \ldots , m$ by 
\begin{eqnarray*}
&& \tilde{g}_j (\underline{t}) \equiv \tilde{g}_{j, \sigma} (\underline{t}) = g_j ( u(\underline{t})) = g_j (u_1 (t_1), \ldots , u_d (t_d) ) ,\\
&& \tilde{h}_j (\underline{t}) \equiv \tilde{h}_{j, \sigma} (\underline{t}) = h_j ( u(\underline{t})) = h_j (u_1 (t_1), \ldots , u_d (t_d))) .
\end{eqnarray*}
Then it follows easily by direct calculation using
\begin{eqnarray*}
\prod_{j=1}^d t_j &= & \sigma^d \exp \left ( \log (1/\sigma) \sum_{j=1}^d u_j^{1/d} \right ), \\
\underline{d t} & = & \prod_{j=1}^d \left \{ \sigma \exp ( \log (1/\sigma) u_j^{1/d} ) 
                          \cdot d^{-1} u_j^{1/d-1} \cdot \log (1/\sigma) (d u_j) \right \} \\
     & = & \frac{\sigma^d (\log (1/\sigma))^d}{d^d} \prod_{j=1}^d t_j \cdot \prod_{j=1}^d u_j^{- (1-1/d)} \cdot \underline{du} \\
 \left \{ \prod_{j=1}^d t_j > \sigma \right \} 
      & = & \left \{ \exp \left ( \log (1/\sigma) \sum_{j=1}^d u_j^{1/d} \right ) > \sigma^{- (d-1)} \right \} \\
      & = & \left \{ \log (1/\sigma) \sum_{j=1}^d u_j^{1/d} > (d-1) \log (1/\sigma) \right \} \\
      & = & \left \{ \sum_{j=1}^d u_j^{1/d} > d-1 \right \} ,
\end{eqnarray*}
that 
\begin{eqnarray*}
\int_{[0,1]^d} (\tilde{h}_j (t) - \tilde{g}_j (t))^2 r_{d, \sigma} (t) \underline{dt}
& = & \int_{[0,1]^d} ( h_j (u) - g_j (u))^2 c_d (u) \underline{du}  .
\end{eqnarray*} 
Thus for $\sigma\le \sigma_0 (d)$ we have
\begin{eqnarray*}
\| \tilde{h}_j - \tilde{g}_j \|_{L_2 (R_{d, \sigma })} \le 2^{d/2+1} \epsilon
\end{eqnarray*}
by the arguments in (a). 
Hence the brackets 
$[\tilde{g}_j , \tilde{h}_j]$ yield a collection of $2^{d/2+1} \epsilon -$ brackets for ${\cal F}_d$ 
with respect to $L_2 (R_{d,\sigma)}$, and this 
implies that (b) holds.
\end{proof}
\bigskip

\par\noindent
{\bf Acknowledgements:}  We owe thanks to the referees for a number of helpful suggestions
and for pointing out the work of \cite{MR2236498} and \cite{MR2489672}.


\end{document}